\documentclass[11pt,a4paper]{amsart}
\usepackage[english]{babel}
\usepackage{amsmath}
\usepackage{amsfonts}
\usepackage{amssymb}
\usepackage{graphicx}
\usepackage{lmodern}

\usepackage{geometry}
 \geometry{
 a4paper,
 total={170mm,257mm},
 left=20mm,
 top=20mm,
 }

\newtheorem{assumption}{Assumption}[section]

\newtheorem{lemma}{Lemma}[section]

\newtheorem{theorem}{Theorem}[section]

\newtheorem{remark}{Remark}[section]

\newtheorem{corollary}{Corollary}[section]


\DeclareMathOperator{\erf}{erf}
\newcommand{\R}{\mathbb R}
\newcommand{\N}{\mathbb N}
\newcommand{\E}{\mathbb{E}}

\newcommand{\X}{\bar{X}}

\newcommand{\C}{\mathcal{C}}
\renewcommand{\P}{\mathcal{P}}
\newcommand{\law}{\text{law}}
\newcommand{\e}{\epsilon}
\newcommand{\ve}{\varepsilon}
\newcommand{\tol}{\texttt{tol}}
\newcommand{\supp}{\text{supp}}

\newcommand{\om}{\omega_f^\alpha}
\newcommand{\lt}{\left}
\newcommand{\rt}{\right}
\newcommand{\pa}{\partial}
\newcommand{\m}{m}

\begin{document}

\title[Consensus-based Optimization]{An analytical framework for a consensus-based\\ global optimization method}

\author[Carrillo]{Jos\'{e} A. Carrillo}
\address[Jos\'{e} A. Carrillo]{\newline Department of Mathematics, Imperial College London, 
    \newline London SW7 2AZ, United Kingdom}
\email{carrillo@imperial.ac.uk}

\author[Choi]{Young-Pil Choi}
\address[Young-Pil Choi]{\newline Department of Mathematics and Institute of Applied Mathematics, Inha University,
    \newline Incheon 402-751, Republic of Korea}
\email{ypchoi@inha.ac.kr}

\author[Totzeck]{Claudia Totzeck}
\address[Claudia Totzeck]{\newline Department of Mathematics, Technische Universit\"at Kaiserslautern, 
\newline Erwin-Schr\"odinger-Strasse, 67663 Kaiserslautern, Germany}
\email{totzeck@mathematik.uni-kl.de}

\author[Tse]{Oliver Tse}
\address[Oliver Tse]{\newline Department of Mathematics and Computer Science, Eindhoven University of Technology, 
\newline P.O. Box 513, 5600MB Eindhoven, The Netherlands}
\email{o.t.c.tse@tue.nl}

\begin{abstract}
In this paper we provide an analytical framework for investigating the efficiency of a consensus-based model for tackling global optimization problems. This work justifies the optimization algorithm in the mean-field sense showing the convergence to the global minimizer for a large class of functions. Theoretical results on consensus estimates are then illustrated by numerical simulations where variants of the method including nonlinear diffusion are introduced.
\end{abstract}

\keywords{Global optimization; opinion dynamics; consensus formation; agent-based models; stochastic dynamics; mean-field limit.}

\subjclass[2000]{35Q83; 35Q91, 35Q93, 37N40, 60H10}

\maketitle
\section{Introduction}\label{sec:intro}

Over the last decades, individual-based models (IBMs) have been widely used in the investigation of complex systems that manifest self-organization or collective behavior. Examples of such complex systems include swarming behavior, crowd dynamics, opinion formation, synchronization, and many more, that are present in the field of mathematical biology, ecology and social dynamics, see for instance \cite{bellomo2013,MR3304346,MR2596552,MR2744704,MR2295620,MR2425606,MR3143990,MR3274797,MR2247927}, and the references therein.

In the field of global optimization, IBMs may be found in a class of {\em metaheuristics} (e.g.~evolutionary algorithms,\cite{aarts1988simulated,back1997handbook,reeves2003genetic} and swarm intelligence,\cite{dorigo2005ant,kennedy2010particle}). They play an increasing role in the design of fast algorithms to provide sufficiently good solutions in tackling hard optimization problems, which includes the traveling salesman problem that is known to be NP hard. Metaheuristics, in general, may be considered as high level concepts for exploring search spaces by using different strategies, chosen in such a way, that a dynamic balance is achieved between the exploitation of the accumulated search experience and the exploration of the search space.\cite{blum2003metaheuristics} Notable metaheuristics for global optimization include, for example, the Ant Colony Optimization, Genetic Algorithms, Particle Swarm Optimization and Simulated Annealing,\cite{holley1988simulated,holley1989asymptotics} all of which are stochastic in nature.\cite{bianchi2009survey} Despite having to stand the test of time, a majority of metaheuristical methods lack the proper justification of its efficacy in the mathematical sense---the universal intent of research in the field is to ascertain whether a given metaheuristic is capable of finding an optimal solution when provided with sufficient information. Due to the stochastic nature of metaheuristics, answers to this question are nontrivial, and they are always probabilistic. 

Recently, the use of opinion dynamics and consensus formation in global optimization has been introduced in,\cite{doi:10.1142/S0218202517400061} where the authors showed substantial numerical and partial analytical evidence of its applicability to solving multi-dimensional optimization problems of the form
\[
 \min\nolimits_{x\in\Omega}\,f(x),\qquad\text{$\Omega\subset\R^d$ a domain},
\]
for a given cost function $f\in\C(\Omega)$ that achieves its global minimum at a unique point in $\Omega$. Without loss of generality, we may assume $f$ to be positive and defined on the whole $\R^d$ by extending it outside $\Omega$ without changing its global minimum.

Throughout the manuscript, we will use the notations $\underline f = \inf f$, $\overline f = \sup f$, and 
\[
 x_* = \arg\min f,\qquad f_* = f(x_*).
\]
As we assume to have a unique global minimizer, it holds $f_* = \underline f$.

The optimization algorithm involves the use of multiple agents located within the domain $\Omega$ to dynamically establish a consensual opinion amongst themselves in finding the global minimizer to the minimization problem, while taking into consideration the opinion of all active agents. First order models for consensus have been studied within the mathematical community interested in granular materials and swarming that lead to aggregation-diffusion and kinetic equations, which have nontrivial stationary states or flock solutions (cf.~\cite{MR3035983,MR3251743,MR3158478,MR3331178}, and the references therein). They are also common tools in control engineering to establish consensus in graphs (cf.~\cite{MR2086916,MR3045811}). 

In order to achieve the goal of optimizing a given function $f(x)$, we consider an interacting stochastic system of $N\in\N$ agents with position $X_t^i\in\R^d$, described by the system of stochastic differential equations
\begin{subequations}\label{eq:particle}
\begin{align}
 d X_t^i &= -\lambda(X_t^i - \m_t)\,dt + \sigma|X_t^i - \m_t|dW_t^i,  \label{eq:particle_a}\\
 \m_t &= \displaystyle\sum_{i=1}^N X_t^i\,\left(\frac{\om(X_t^i)}{\sum_{j=1}^N \om(X_t^j)}\right), \label{eq:particle_b}
\end{align}
\end{subequations}
with $\lambda,\sigma>0$, where $\om$ is a weight, which we take as $\om(x) = \exp(-\alpha f(x))$ for some appropriately chosen $\alpha>0$. Notice that \eqref{eq:particle} resembles a geometric Brownian motion, which drifts towards $\m_t\in\R^d$. This system is a simplified version
of the algorithm introduced in,\cite{doi:10.1142/S0218202517400061} while keeping the essential ingredients and mathematical difficulties. The first term in \eqref{eq:particle_a} imposes a global relaxation towards a position determined by the behavior of the normalized moment given by $\m_t$, while the diffusion term tries to concentrate again around the behavior of $\m_t$. In fact, agents with a position differing a lot from $\m_t$ are diffused more. Hence they explore a larger portion of the landscape of the graph of $f(x)$, while the explorer agents closer to $\m_t$ diffuse much less. The normalized moment $\m_t$ is expected to dynamically approach the global minimum of the function $f$, at least when $\alpha$ is large enough, see below. This idea is also used in simulated annealing algorithms.\cite{holley1988simulated,holley1989asymptotics} The well-posedness of this system will be thoroughly investigated in Section~\ref{sec:microscopic}.

For the solution $X_t^1,\ldots,X_t^N$, $N\in\N$ of the particle system \eqref{eq:particle}, we can consider its empirical measure given by
\[
\rho_t^N = \frac{1}{N}\sum\nolimits_{i=1}^N \delta_{X_t^i},
\]
where $\delta_x$ is the Dirac measure at $x\in\R^d$. Observe that $\m_t$ may be re-written in terms of $\rho_t^N$, i.e.,
\[
 \m_t = \frac{1}{\|\om\|_{L^1(\rho_t^N)}}\int x\,\om d\rho_t^N =: \m_f[\rho_t^N].
\]
Therefore \eqref{eq:particle_a} may be formulated as
\[
 d X_t^i = -\lambda(X_t^i - \m_f[\mu_t^N])\,dt + \sigma|X_t^i - \m_f[\mu_t^N]|dW_t^i,
\]
for which we postulate the limiting ($N\to\infty$) {\em nonlinear process} $\X_t$ to satisfy
\begin{subequations}\label{eq:nonlinear}
\begin{gather}
  d\X_t = -\lambda(\X_t - \m_f[\rho_t])\,dt + \sigma|\X_t - \m_f[\rho_t]|dW_t, \label{eq:nonlinear_sde}\\
  \m_f[\rho_t] = \int x\,d\eta_t^\alpha,\qquad \eta_t^\alpha = \om \rho_t / \|\om\|_{L^1(\rho_t)},\qquad \rho_t=\law(\X_t),\label{eq:nonlinear_sde_b}
\end{gather}
\end{subequations}
subject to the initial condition $\law(\X_0)=\rho_0$. We call $\eta_t^\alpha$ the {\em $\alpha$-weighted measure}. The measure $\rho_t=\law(\X_t)\in\P(\R^d)$ is a Borel probability measure, which describes the evolution of a one-particle  distribution resulting from the mean-field limit. 

The (infinitesimal) generator corresponding to the nonlinear process \eqref{eq:nonlinear_sde} is given by 
\begin{align*}
L\varphi = \kappa \Delta \varphi - \mu\cdot\nabla\varphi, \quad \text{for $\varphi \in \mathcal{C}^2_c(\R^d)$,}
\end{align*}
with drift and diffusion coefficients
\[
\mu_t =\lambda(x-\m_f[\rho_t]),\qquad \kappa_t = (\sigma^2/2)|x-\m_f[\rho_t]|^2,
\]
respectively. Therefore, the Fokker--Planck equation associated to \eqref{eq:nonlinear} reads
\begin{align}\label{eq:fokker-planck}
\partial_t \rho_t = \Delta(\kappa_t\rho_t) + \nabla\cdot(\mu_t\rho_t),\qquad \lim\nolimits_{t\to 0}\rho_t = \rho_0,
\end{align}
where $\rho_t\in\P(\R^d)$ for $t\ge 0$ satisfies \eqref{eq:fokker-planck} in the weak sense.
Notice that the Fokker--Planck equation \eqref{eq:fokker-planck} is a nonlocal, nonlinear degenerate drift-diffusion equation, which makes its analysis a nontrivial task. Its well-posedness will be the topic of Section~\ref{sec:mesoscopic}. 

We recall from \cite{doi:10.1142/S0218202517400061} (cf.~\cite{dembo2009large}), that for any $\rho\in\P(\R^d)$, $\om\rho$ satisfies the well-known {\em Laplace principle}:
\[
 \lim_{\alpha\to\infty} \left( -\frac{1}{\alpha}\log\left(\int e^{-\alpha f} d\rho \right)\right) = \underline f|_{\text{supp}(\rho)} > 0,
\]
Therefore, if $f$ attains its minimum at a single point $x_*\in\text{supp}(\rho)$, then the $\alpha$-weighted measure $\eta_t^\alpha\in\P(\R^d)$ assigns most of its mass to a small region around $x_*$  and hence it approximates a Dirac distribution $\delta_{x_*}$ at $x_*\in\R^d$ for large $\alpha\gg1$. Consequently, the first moment of $\eta_t^\alpha$, given by $\m_f[\rho]$, provides a good estimate of $x_*=\arg\min f$. 
Using this fact, we proceed to give justifications for the applicability of the microscopic system \eqref{eq:particle} as a tool for solving global optimization problems, via its mean-field counterpart.

\begin{figure}[ht!]
\centering \includegraphics[width=0.5\textwidth]{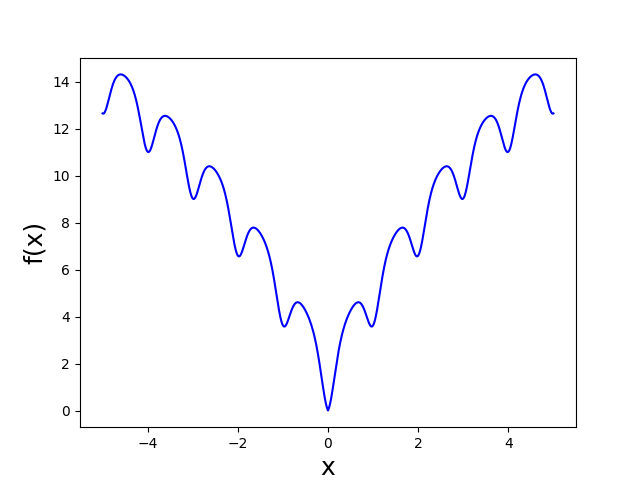}
\label{fig:Preprocessing}
\caption{The Ackley function is a well-known benchmark for global optimization problems due to its various local minima and the unique global minimum.}
\end{figure}  

Our main results show in Section~\ref{sec_lt} that mild assumptions on the regularity of the objective function $f$, $f\in W^{2,\infty}(\R^d)$, one obtains a uniform consensus as the limiting measure $(t\to\infty)$ corresponding to \eqref{eq:fokker-planck}, i.e.,
\[
 \rho_t \longrightarrow \delta_{\tilde x}\quad\text{as}\;\; t\to\infty,
\]
for some $\tilde x\in\R^d$ possibly depending on the initial density $\rho_0$. It is also shown that this convergence happens exponentially in time. Moreover, under the same assumptions on $f$, the point of consensus $\tilde x$ may be made arbitrarily close to $x_*=\arg\min f$ by choosing $\alpha\gg 1$ sufficiently large, which is the main goal for global optimization. Our regularity assumptions allow for complicated landscapes of objective functions with as many local minimizers as you want but with a well defined unique global minimum, see for instance the Ackley function--a well-known benchmark for global optimization problems\cite{askari2013large}---used in \cite{doi:10.1142/S0218202517400061} and depicted in Figure \ref{fig:Preprocessing}. {\it Up to our knowledge, this work shows for the first time the convergence of an agent-based stochastic scheme for global optimization with mild assumptions on the regularity of the cost function.}
We conclude the paper with an extension of the Fokker--Planck equation \eqref{eq:fokker-planck} to include nonlinear diffusion of porous medium type and provide numerical evidence for consensus formation in the one dimensional case. For this reason, we introduce an equivalent formulation of the mean-field equation in terms of the pseudo-inverse distribution $\chi_t(\eta) = \inf\{x\in\R\,|\, \rho_t((-\infty,x])>\eta\}$. We also compare the microscopic approximation corresponding to the porous medium type Fokker--Planck equation with the original consensus-based microscopic system \eqref{eq:particle} and the proposed algorithm in \cite{doi:10.1142/S0218202517400061}, showcasing the exponential decay rate of the error in suitable transport distances towards the global minimizer.

\section{Well-posedness of the Microscopic Model}\label{sec:microscopic}

We begin by studying the existence of a unique process $\{{\bf X}_t^{(N)}\,|\,t\ge 0\}$ with ${\bf X}^{(N)}:=(X^{(1,N)},\ldots,X^{(N,N)})^\top$, satisfying the consensus-based optimization scheme \eqref{eq:particle}, and write, for an arbitrary but fixed $N\in\N$, system \eqref{eq:particle} as
\begin{align}\label{eq:system}
 d{\bf X}_t^{(N)} = -\lambda {\bf F}_N({\bf X}_t^{(N)})\,dt + \sigma {\bf M}_N({\bf X}_t^{(N)})\,d{\bf W}_t^{(N)},
\end{align}
where ${\bf W}=(W^{(1,N)},\ldots,W^{(N,N)})^\top$ is the standard Wiener process in $\R^{Nd}$, and
\begin{gather*}
 {\bf F}_N({\bf X}) = (F_N^1({\bf X}),\ldots,F_N^N({\bf X}))^\top\in\R^{Nd},\\
 \text{where}\;\;F_N^i({\bf X}) = \frac{\sum\nolimits_{j\ne i} (X^i-X^j)\,\om(X^j)}{\sum\nolimits_j \om(X^j)}\in\R^d, \\
 {\bf M}_N({\bf X}) = \text{diag}(|F_N^1({\bf X})|\mathbb{I}_d,\ldots,|F_N^N({\bf X})|\mathbb{I}_d) \in \R^{Nd\times Nd}.
\end{gather*}
At this point, we make smoothness assumptions regarding our cost function $f$.

\begin{assumption}
 The cost function $f\colon\R^d\to\R$ is locally Lipschitz continuous.
\end{assumption}

 Under these conditions on $f$, we easily deduce that $F_N^i$, $1\le i\le N$, is locally Lipschitz continuous and has linear growth. Consequently, ${\bf F}_N$ and ${\bf M}_N$ are locally Lipschitz continuous and have linear growth. To be more precise, we obtain the following result.

\begin{lemma}\label{lem:localLip}
 Let $N\in\N$, $\alpha,k>0$ be arbitrary. Then for any ${\bf X},{\bf \hat X}\in\R^{Nd}$ with $|{\bf X}|,|{\bf \hat X}|\le k$ and all $i =1, \dots,N$ it holds
 \begin{align*}
  |F_N^i({\bf X}) -  F_N^i({\bf \hat X})| &\le |X^i - \hat X^i| + \lt(1+\frac{2c_k}{N}\sqrt{N|\hat X^i|^2 + |{\bf  \hat X}|^2}\;\rt)|{\bf X} - {\bf \hat X}|,\\
  |F^i_N({\bf X})| &\le |X^i| + |{\bf X}|,
 \end{align*}
 where $c_k= \alpha\|\nabla f\|_{L^\infty(B_k)}\exp(\alpha \|f-\underline f\|_{L^\infty(B_k)})$ and $B_k=\{x\in\R^d\;|\; |x|\le k\}$.
\end{lemma}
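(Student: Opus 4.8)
The plan is to reduce both bounds to the algebraic identity $F_N^i({\bf X}) = X^i - \m({\bf X})$, where $\m({\bf X}) := \big(\sum_j \om(X^j)\big)^{-1}\sum_j X^j\,\om(X^j)$ is the normalised weighted mean; this holds because the $j=i$ summand in the numerator of $F_N^i$ vanishes, so the restricted and the full sums agree. The growth estimate is then immediate: $\m({\bf X})$ is a convex combination of $X^1,\dots,X^N$, so $|\m({\bf X})|\le\max_j|X^j|\le|{\bf X}|$, hence $|F_N^i({\bf X})|\le|X^i|+|{\bf X}|$.

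For the Lipschitz estimate, I would first note that the weights enter $F_N^i$ and $\m$ only through ratios, so $\om=e^{-\alpha f}$ may be replaced by the renormalised weight $\varpi:=e^{-\alpha(f-\underline f)}$ without changing either. Since $|{\bf X}|\le k$ forces $X^j\in B_k$ for all $j$, on $B_k$ one has $e^{-\alpha\|f-\underline f\|_{L^\infty(B_k)}}\le\varpi\le1$ and $|\nabla\varpi|=\alpha|\nabla f|\,\varpi\le\alpha\|\nabla f\|_{L^\infty(B_k)}$, so $\varpi$ is Lipschitz on $B_k$ with constant $L_k:=\alpha\|\nabla f\|_{L^\infty(B_k)}$, and $c_k = L_k\,e^{\alpha\|f-\underline f\|_{L^\infty(B_k)}}$. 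Writing $S$ and $\hat S$ for the sums of renormalised weights at ${\bf X}$ and ${\bf\hat X}$, one has $S,\hat S\ge N e^{-\alpha\|f-\underline f\|_{L^\infty(B_k)}}$. The splitting $F_N^i({\bf X})-F_N^i({\bf\hat X})=(X^i-\hat X^i)-\big(\m({\bf X})-\m({\bf\hat X})\big)$ already yields the leading $|X^i-\hat X^i|$ term, so it remains to estimate $\m({\bf X})-\m({\bf\hat X})$.

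For that, with $w_j:=\varpi(X^j)/S$ and $\hat w_j:=\varpi(\hat X^j)/\hat S$, I would use
\[
\m({\bf X})-\m({\bf\hat X})=\sum_j w_j\,(X^j-\hat X^j)+\sum_j (w_j-\hat w_j)\,\hat X^j .
\]
The first sum is a convex combination of displacements, hence at most $\max_j|X^j-\hat X^j|\le|{\bf X}-{\bf\hat X}|$; this accounts for the $1$ in the stated coefficient. In the second sum I would exploit $\sum_j(w_j-\hat w_j)=0$ to recentre it as $\sum_j(w_j-\hat w_j)(\hat X^j-\hat X^i)$, expand $w_j-\hat w_j = \hat S^{-1}\big(\varpi(X^j)-\varpi(\hat X^j)\big)+w_j\,\hat S^{-1}(\hat S-S)$, and bound each piece by the Lipschitz property of $\varpi$, the estimate $|\hat S-S|\le L_k\sum_j|X^j-\hat X^j|$, and the lower bounds on $S,\hat S$. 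The factor $\sqrt{N|\hat X^i|^2+|{\bf\hat X}|^2}$ emerges from Cauchy--Schwarz via $\sum_j(|\hat X^i|+|\hat X^j|)\,|X^j-\hat X^j|\le(\sqrt N|\hat X^i|+|{\bf\hat X}|)\,|{\bf X}-{\bf\hat X}|\le2\sqrt{N|\hat X^i|^2+|{\bf\hat X}|^2}\;|{\bf X}-{\bf\hat X}|$, together with $S^{-1}\le(Ne^{-\alpha\|f-\underline f\|_{L^\infty(B_k)}})^{-1}$ and $L_k\,e^{\alpha\|f-\underline f\|_{L^\infty(B_k)}}=c_k$, producing the coefficient $\tfrac{2c_k}{N}\sqrt{N|\hat X^i|^2+|{\bf\hat X}|^2}$.

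The step I expect to require the most care is the contribution of the term $w_j\,\hat S^{-1}(\hat S-S)$ coming from the difference of the normalising denominators: here one has the crude bound $|\hat S-S|\le L_k\sqrt N\,|{\bf X}-{\bf\hat X}|$ after Cauchy--Schwarz, and recovering the stated $N^{-1}$ scaling (rather than $N^{-1/2}$) requires using that $\sum_j w_j(\hat X^j-\hat X^i)$ is an \emph{average} rather than a sum, and tracking the uniform lower bound $e^{-\alpha\|f-\underline f\|_{L^\infty(B_k)}}$ of the denominators so that it enters the final constant as cleanly as possible. The remaining manipulations — converting each $\ell^1$-type sum over $j$ to $|{\bf X}-{\bf\hat X}|$ or $|{\bf\hat X}|$ by Cauchy--Schwarz — are routine.
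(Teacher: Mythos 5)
Your argument is correct and follows essentially the same route as the paper: the identity $F_N^i({\bf X})=X^i-\m({\bf X})$ gives the growth bound via the convex-combination property, and your three pieces — the convex combination of displacements, the numerator-weight variation, and the denominator variation — correspond exactly to the paper's terms $I_1$, $I_2$, $I_3$ (the paper keeps the $j\ne i$ restriction where you recentre using $\sum_j(w_j-\hat w_j)=0$, which is the same device). The only caveat is the one you already flag: the crude handling of the denominator term yields the right structure but a marginally different constant, which is immaterial for the local Lipschitz and linear-growth conclusions the lemma is used for.
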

\begin{proof}
 Let ${\bf X},{\bf \hat X}\in\R^{Nd}$ with $|{\bf X}|,|{\bf \hat X}|\le k$ for some $k\ge 0$ and $i \in \{ 1,\dots, N\}$ be arbitrary. Then
 \[
  F_N^i({\bf X}) -  F_N^i({\bf \hat X}) = \frac{\sum\nolimits_{j\ne i} (X^i-X^j)\,\om(X^j)}{\sum\nolimits_j \om(X^j)} - \frac{\sum\nolimits_{j\ne i} (\hat X^i- \hat X^j)\,\om( \hat X^j)}{\sum\nolimits_j \om(\hat X^j)} = \sum\nolimits_{\ell=1}^3 I_\ell,
 \]
 where the terms $I_\ell$, $\ell=1,2,3$, are given by
 \begin{align*}
  I_1 &= \frac{\sum\nolimits_{j\ne i} (X^i - \hat X^i + \hat X^j-X^j)\,\om(X^j)}{\sum\nolimits_j \om(X^j)},\\
  I_2 &= \frac{\sum\nolimits_{j\ne i} ( \hat X^i-\hat X^j)\lt(\om(X^j) - \om(\hat X^j)\rt)}{\sum\nolimits_j \om(X^j)},\\
  I_3 &= \sum\nolimits_{j\ne i} (\hat X^i- \hat X^j)\,\om( \hat X^j) \frac{\sum\nolimits_{j} \lt(\om(\hat X^j) - \om(X^j)\rt)}{\sum\nolimits_j \om(X^j) \sum\nolimits_j \om(\hat X^j)},
 \end{align*}
 that may easily be estimated by
 \begin{align*}
  |I_1| &\le |X^i - \hat X^i|  + |{\bf X} - {\bf \hat X}|,\\
  |I_2| &\le \frac{\sqrt{2}c_k}{N}|{\bf X} - {\bf \hat X}|\sqrt{ N|\hat X^i|^2 + |{\bf \hat X}|^2},\\
  |I_3| &\le \frac{c_k}{N}|{\bf X} - {\bf \hat X}|\sqrt{N|\hat X^i|^2 + |{\bf \hat X}|^2}.
 \end{align*}
Putting all these terms together yields the required estimate.

As for the estimate of $|F^i_N({\bf X})|$, we easily obtain
\[
|F^i_N({\bf X})| = \left|X^i - \frac{\sum\nolimits_{j} X^j\,\om(X^j)}{\sum\nolimits_j \om(X^j)}\right|\leq |X^i| + |{\bf X}|.
\]
As $i$ was chosen arbitrary, this concludes the result.
\end{proof}

Due to Lemma~\ref{lem:localLip}, we may invoke standard existence results of strong solutions for \eqref{eq:system}.\cite{durrett1996stochastic}

\begin{theorem}
 For each $N\in\N$, the stochastic differential equation \eqref{eq:system} has a unique strong solution $\{{\bf X}_t^{(N)}\,|\,t\ge 0\}$ for any initial condition ${\bf X}_0^{(N)}$ satisfying $\E|{\bf X}_0^{(N)}|^2<\infty$.
\end{theorem}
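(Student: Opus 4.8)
The plan is to combine the local Lipschitz continuity and linear growth recorded in Lemma~\ref{lem:localLip} via the classical localization argument for stochastic differential equations with locally Lipschitz coefficients; this is the "standard existence result" alluded to, see \cite{durrett1996stochastic}.

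First I would note that the coefficients ${\bf F}_N$ and ${\bf M}_N$ are genuinely well defined: on each ball $B_k\subset\R^{Nd}$ the denominator $\sum_j\om(X^j)$ is bounded below by a strictly positive constant (since $\om=\exp(-\alpha f)>0$ and $f$ is bounded on bounded sets), so Lemma~\ref{lem:localLip} indeed applies and furnishes, for every $k$, a Lipschitz constant for ${\bf F}_N$ and ${\bf M}_N$ on $B_k$. A Picard iteration localized to these balls then produces, for any initial condition ${\bf X}_0^{(N)}$ with $\E|{\bf X}_0^{(N)}|^2<\infty$, a strong solution $\{{\bf X}_t^{(N)}\}$ together with pathwise uniqueness up to the explosion time $\tau_\infty:=\lim_{k\to\infty}\tau_k$, where $\tau_k:=\inf\{t\ge 0 : |{\bf X}_t^{(N)}|\ge k\}$; uniqueness on each stochastic interval $[0,\tau_k]$ follows from the local Lipschitz bound and a Grönwall argument.

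Next I would show $\tau_\infty=\infty$ almost surely by an a priori second-moment estimate. Applying It\^o's formula to $t\mapsto|{\bf X}_{t\wedge\tau_k}^{(N)}|^2$, the drift contribution $-2\lambda\,{\bf X}\cdot{\bf F}_N({\bf X})$ and the It\^o correction $\sigma^2\sum_{i=1}^N|F_N^i({\bf X})|^2$ are both controlled by $C|{\bf X}|^2$ for a constant $C=C(N,\lambda,\sigma)$, using the linear growth estimate $|F_N^i({\bf X})|\le|X^i|+|{\bf X}|$ from Lemma~\ref{lem:localLip}. Since the process is bounded by $k$ on $[0,\tau_k]$, the stochastic integral is a true martingale there, and taking expectations gives
\[
 \E\big|{\bf X}_{t\wedge\tau_k}^{(N)}\big|^2 \le \E\big|{\bf X}_0^{(N)}\big|^2 + C\int_0^t \E\big|{\bf X}_{s\wedge\tau_k}^{(N)}\big|^2\,ds,
\]
whence $\E|{\bf X}_{t\wedge\tau_k}^{(N)}|^2\le\E|{\bf X}_0^{(N)}|^2\,e^{Ct}$ by Grönwall's inequality, uniformly in $k$. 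Using $k^2\,\mathbb{P}(\tau_k\le t)\le\E|{\bf X}_{t\wedge\tau_k}^{(N)}|^2$ and letting $k\to\infty$ shows $\mathbb{P}(\tau_\infty\le t)=0$ for every $t>0$, i.e.\ $\tau_\infty=\infty$ almost surely; combined with the previous step this yields the unique global strong solution.

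The scheme has no serious obstacle; the only point that requires care is the first one, namely checking that the normalization by $\sum_j\om(X^j)$ does not spoil local Lipschitzness or cause the coefficients to blow up, which is precisely what Lemma~\ref{lem:localLip} secures. Everything downstream is the routine localization-plus-Grönwall argument.
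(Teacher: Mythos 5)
Your proposal is correct and follows essentially the same route as the paper: the paper also rests on Lemma~\ref{lem:localLip} for local Lipschitz continuity and linear growth, and its key step is precisely the quadratic bound $-2\lambda{\bf X}\cdot{\bf F}_N({\bf X})+\sigma^2\mathrm{trace}({\bf M}_N{\bf M}_N^\top)({\bf X})\le b_N|{\bf X}|^2$ that you derive inside your It\^o/Gr\"onwall non-explosion argument, the only difference being that the paper feeds this bound into \cite[Theorem~3.1]{durrett1996stochastic} as a black box while you unpack that theorem into the explicit localization, stopping-time, and Chebyshev steps.
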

\begin{proof}
 As mentioned above, we make use of a standard result on existence of a unique strong solution. To this end, we show the existence of a constant $b_N>0$, such that
 \begin{align}\label{eq:estimate_N}
  -2\lambda{\bf X}\cdot {\bf F}_N({\bf X}) + \sigma^2\text{trace}({\bf M}_N {\bf M}_N^\top)({\bf X}) \le b_N|{\bf X}|^2.
 \end{align}
 Indeed, since the following inequalities hold:
 \begin{align*}
   -X^i\cdot F_N^i({\bf X}) &= -X^i\cdot \frac{\sum\nolimits_{j\ne i} (X^i - X^j)\om(X^j)}{\sum\nolimits_j \om(X^j)} \le -|X^i|^2 + |X^i||{\bf X}|,\\
  |F_N^i({\bf X})|^2 &= \left|\frac{\sum\nolimits_{j\ne i} (X^i - X^j)\om(X^j)}{\sum\nolimits_j \om(X^j)}\right|^2  \le 2\lt(|X^i|^2+ |{\bf X}|^2\rt),
 \end{align*}
 we conclude that
 \begin{align*}
  -2\lambda {\bf X}\cdot {\bf F}_N({\bf X}) + \sigma^2\text{trace}({\bf M}_N {\bf M}_N^\top)({\bf X}) &= \sum\nolimits_i \left(-2\lambda  X^i \cdot F_N^i({\bf X}) + d\sigma^2|F_N^i({\bf X})|^2\right) \\
  &\hspace*{-4.5em} \le \sum\nolimits_i 2\lambda\lt(-|X^i|^2 + |X^i||{\bf X}| \rt)+ 2d\sigma^2\lt(|X^i|^2+ |{\bf X}|^2\rt) \\
  &\hspace*{-4.5em} \le 2\left(\lambda\sqrt{N} + 2d\sigma^2N\right)|{\bf X}|^2 =: b_N|{\bf X}|^2.
 \end{align*}
 Along with the local Lipschitz continuity and linear growth of ${\bf F}_N$ and ${\bf M}_N$, we obtain the assertion by applying Theorem~3.1 of \cite{durrett1996stochastic}.
\end{proof}

\begin{remark}
 In fact, the estimate \eqref{eq:estimate_N} yields a uniform bound on the second moment of ${\bf X}_t$. Indeed, by application of the It\^o formula, we obtain
 \begin{align*}
  \frac{d}{dt}\E|{\bf X}_t^{(N)}|^2 &= -2\lambda\E[ {\bf X}_t \cdot {\bf F}_N({\bf X}_t^{(N)})] + \sigma^2\E[\text{trace}({\bf M}_N {\bf M}_N^\top)({\bf X}_t^{(N)})] \\
  &\le b_N\,\E|{\bf X}_t^{(N)}|^2.
 \end{align*}
 Therefore, the Gronwall inequality yields
 \[
  \E|{\bf X}_t^{(N)}|^2 \le e^{b_Nt}\E|{\bf X}_0^{(N)}|^2\qquad\text{for all\, $t\ge 0$},
 \]
 i.e., the solution exists globally in time for each fixed $N\in \N$. 
\end{remark}
 Unfortunately, for the mean-field limit ($N\to\infty$) we lose control of the previous bound, since $b_N\to\infty$ as $N\to\infty$. Therefore, we will need a finer moment estimates on ${\bf X}^{(N)}$, which we establish at the end of the following section (cf.~Lemma~\ref{lem:moment_particle}). 

%
%
%

\section{Well-posedness of the Mean-field Equation}\label{sec:mesoscopic}

In this section, we provide the well-posedness of the nonlocal, nonlinear Fokker--Planck equation \eqref{eq:fokker-planck}. Since we will be working primarily with Borel probability measures on $\R^d$ with finite second moment, we provide its definition for the readers convenience. 

We denote the space of Borel probability measures on $\R^d$ with finite second moment by
	\[
	\mathcal{P}_2(\R^d):=\left\lbrace \mu\in\P(\R^d) \quad\text{such that}\quad \int_{\R^d}|z|^2 \mu(dz)<\infty\right\rbrace.
	\]
that we equip with the 2-Wasserstein distance $W_2$ defined by 
\[
W_2^2(\mu,\hat \mu) =\inf\, \biggl\{ \int_{\R^d\times\R^d} |z-\hat z|^2\pi(d z,d\hat z)\;,\; \pi\in\Pi(\mu,\hat \mu)\biggr\},\qquad \mu,\hat \mu\in\P_2(\R^d),
\]
where $\Pi(\mu,\hat \mu)$ denotes the collection of all Borel probability measures on $\R^d\times\R^d$ with marginals $\mu$ and $\hat \mu$ on the first and second factors respectively. The set $\Pi(\mu,\hat \mu)$ is also known as the set of all couplings of $\mu$ and $\hat\mu$. Equivalently, the Wasserstein distance may be defined by
\[
W_2^2(\mu,\hat \mu) = \inf \E\left[|Z-\hat Z|^2\right],
\]
where the infimum is taken over all joint distributions of the random variables $Z$ and $\hat Z$ with marginals $\mu$ and $\hat \mu$ respectively.
It is well-known that $W_2$ defines a metric on $\mathcal{P}_2(\R^d)$. Since $\R^d$ is a separable complete metric space and $p$ a positive number, the metric space $(\P_p(\R^d),W_p)$ is separable and complete,\cite{BolleyPolish} in particular this yields $(\P_2(\R^d),W_2)$ is Polish. We remind the reader that a Polish space is a separable completely metrizable topological space. With each point $\mu \in \P_2(\R^d)$ and every $\epsilon > 0$ we associate an $\epsilon$-ball $U_{\epsilon}(\mu) = \{ \nu \in \P_2(\R^d) \colon W_2(\mu,\nu) < \epsilon \}$.  The $\epsilon$-balls form the basis of a topology, called the topology of the metric space $(\P_2(\R^d),W_2)$. If this topology agrees with a given topology $\mathcal{O}$ on $\P_2(\R^d)$, we say that the metric $W_2$ is compatible with the topology $\mathcal{O}$.  If for a topological space $(X, \mathcal{O})$ there exists a compatible metric, we say $(X, \mathcal{O})$ is metrizable or that the metric metrizes the topological space $(X, \mathcal{O})$. For $\R^d$ and $p \in [1,\infty)$ it is well-known that the Wasserstein distance $W_p$ is compatible with the weak topology in $\P_p(\R^d)$.
Moreover, convergence in $W_p$ implies convergence of the first $p$ moments (see Chapter 6 of \cite{OldandNew} for more details). Altogether, $W_2$ metrizes the weak convergence in $\P_2(\R^d)$ and convergence in $W_2$ implies convergence of the first two moments. Moreover, $\P_2(\R^d)$ with the weak topology is Polish.

We split the results of this section into two parts, based on additional assumptions on $f$. We begin our investigation with the easier of the two, which also provides the means to prove the other. Throughout this section, we assume that $f$ satisfies the following assumptions:

\begin{assumption}\label{ass}
	\begin{enumerate}
    \item The cost function $f\colon\R^d\to\R$ is bounded from below with $\underline f := \inf f$.
    \item There exist constants $L_f$ and $c_u>0$ such that
    \begin{align}
    \left\lbrace\quad\begin{aligned}
    |f(x)-f(y)| &\le L_f(|x|+|y|)|x-y| \quad \text{for all\, $x,y\in\R^d$},\\
    f(x)-\underline f &\le c_u(1+|x|^2)\quad \text{for all\, $x\in\R^d$}.
    \end{aligned}\right.\tag{A1}\label{ass:1}
    \end{align}
	\end{enumerate}
\end{assumption}

\subsection{Bounded cost functions}\label{subsec:bounded} In addition to Assumption~\ref{ass} we consider cost functions $f$ that are bounded from above. In particular, $f$ has the upper bound $\overline f:=\sup f$.

\

The main result of this section is provided by the following theorem.

\begin{theorem}\label{thm:well_posed_bounded}
	Let $f$ satisfy Assumption~\ref{ass} and be bounded, and $\rho_0\in\P_4(\R^d)$.
	Then there exists a unique nonlinear process $\X\in \C([0,T],\R^d)$, $T>0$, satisfying 
	\begin{align*}
	d\X_t = -\lambda(\X_t - \m_f[\rho_t])\,dt + \sigma|\X_t - \m_f[\rho_t]|dW_t,\qquad \rho_t=\law(\X_t),
	\end{align*}
	in the strong sense, and $\rho\in \C([0,T],\P_2(\R^d))$ satisfies the corresponding Fokker--Planck equation \eqref{eq:fokker-planck} (in the weak sense) with $\lim\nolimits_{t\to 0}\rho_t=\rho_0\in\P_2(\R^d)$.
\end{theorem}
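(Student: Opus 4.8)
The plan is to construct the nonlinear process $\bar X$ via a fixed-point argument in the space $\mathcal{C}([0,T],\mathcal{P}_2(\R^d))$, exploiting the fact that for a bounded cost function the weighted mean $\m_f[\cdot]$ is well-defined and Lipschitz with respect to $W_2$ on bounded sets of $\mathcal{P}_2$. Concretely, I would set up the map $\mathcal{T}$ that sends a curve $t\mapsto \rho_t$ to the law of the solution of the \emph{linear} (non-autonomous) SDE
\[
  d\bar X_t = -\lambda(\bar X_t - \m_f[\rho_t])\,dt + \sigma|\bar X_t - \m_f[\rho_t]|\,dW_t,\qquad \law(\bar X_0)=\rho_0,
\]
in which $\m_f[\rho_t]$ is now a prescribed continuous (in fact, bounded) function of $t$; since the drift and diffusion coefficients are then globally Lipschitz in $x$ with time-dependent but bounded coefficients, this auxiliary SDE has a unique strong solution, and one obtains moment bounds on $\bar X_t$ (including fourth moments, using $\rho_0\in\P_4$) by Itô's formula and Gronwall. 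A fixed point of $\mathcal{T}$ is precisely a solution of the nonlinear problem, and the associated family of laws then solves the Fokker--Planck equation \eqref{eq:fokker-planck} in the weak sense by Itô's formula applied to test functions $\varphi\in\mathcal{C}^2_c(\R^d)$.

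The first block of work is the a priori estimate on $\m_f[\rho]$: because $f$ is bounded, $\om=\exp(-\alpha f)$ satisfies $0<e^{-\alpha\overline f}\le \om\le e^{-\alpha\underline f}$, so $\|\om\|_{L^1(\rho)}$ is bounded below uniformly in $\rho\in\P(\R^d)$, and one gets $|\m_f[\rho]|\le (\text{const})\int|x|\,d\rho \le (\text{const})\,(1+M_2(\rho))^{1/2}$, where $M_2$ is the second moment. Next I would prove the stability estimate
\[
  |\m_f[\rho] - \m_f[\hat\rho]| \le C\,W_2(\rho,\hat\rho)
\]
valid on sets where the second moments of $\rho,\hat\rho$ are controlled; this uses the boundedness and Lipschitz bound \eqref{ass:1} on $f$ (hence on $\om$) together with the definition of $W_2$ via couplings, splitting the difference of the two normalized first moments exactly as in the $I_1,I_2,I_3$ decomposition of Lemma~\ref{lem:localLip}. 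Granting these two facts, the contraction argument is standard: for two input curves $\rho,\hat\rho$ one couples the corresponding solutions through the same Brownian motion, applies Itô to $|\bar X_t-\hat{\bar X}_t|^2$, uses the Lipschitz-in-$x$ property of drift/diffusion plus the $W_2$-Lipschitz bound on $\m_f$, and closes with Gronwall to obtain
\[
  \sup_{t\le T} W_2^2\big(\mathcal{T}(\rho)_t,\mathcal{T}(\hat\rho)_t\big) \le C(T)\int_0^T W_2^2(\rho_s,\hat\rho_s)\,ds,
\]
which gives a contraction for $T$ small and then a global solution on $[0,T]$ by iteration; uniqueness follows from the same Gronwall estimate.

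The main obstacle is the degeneracy of the diffusion coefficient $\sigma|x-\m_f[\rho_t]|$: it vanishes where $x=\m_f[\rho_t]$, so it is only locally (not globally) Lipschitz as a function of the \emph{difference}, but crucially $x\mapsto |x-m|$ is globally Lipschitz with constant $1$, which is exactly what rescues the well-posedness of the frozen linear SDE and the coupling estimate — this is the point that must be handled carefully rather than waved through. A secondary technical point is propagating enough moments: one needs at least second moments to make $\m_f$ and the Wasserstein setup meaningful, and the hypothesis $\rho_0\in\P_4(\R^d)$ is there to ensure the needed higher-moment bounds survive the Gronwall step (the quadratic growth of $\kappa_t$ in $x$ forces one to control $\E|\bar X_t|^4$ in order to bound $\tfrac{d}{dt}\E|\bar X_t|^2$), so I would carry the fourth-moment estimate alongside the second-moment one throughout. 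Finally, once the process is built I would verify that $t\mapsto\rho_t$ lies in $\mathcal{C}([0,T],\P_2(\R^d))$ — continuity in $W_2$ follows from the moment bound plus the SDE giving $\E|\bar X_t-\bar X_s|^2\le C|t-s|$ — and that it is a weak solution of \eqref{eq:fokker-planck}, which is immediate from Itô's formula and the definition of $L$.
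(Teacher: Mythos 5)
Your proposal is correct, but it reaches existence by a genuinely different route than the paper. You run a Banach/Picard contraction on the space of measure-valued curves: the map $\rho\mapsto\law(Y)$ on (a moment-bounded subset of) $\C([0,T],\P_2(\R^d))$, closed via the coupling-through-the-same-Brownian-motion estimate and Gronwall. The paper instead works with the finite-dimensional curve $u_t=\m_f[\rho_t]$: it defines $\mathcal T u=\m_f[\nu]$ on $\C([0,T],\R^d)$, shows $\mathcal T$ is \emph{compact} (H\"older-$1/2$ continuity of $t\mapsto\m_f[\nu_t]$ from the fourth-moment bound and It\^o isometry), derives an a priori bound on solutions of $u=\tau\mathcal Tu$ using the boundedness of $f$, and invokes the Leray--Schauder fixed point theorem; uniqueness is then proved separately by exactly the coupling/Gronwall argument you describe. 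What each buys: your contraction gives existence and uniqueness in one stroke and is the standard McKean--Vlasov template, but it forces you to exhibit an invariant set of curves with \emph{uniform} fourth-moment bounds (so that the stability constant $c_0$ of Lemma~\ref{lem:stability} is uniform over the iteration) and to manage the smallness-of-$T$-then-iterate bookkeeping; the paper's compactness route needs no contraction constant and no invariant set, only an a priori estimate on actual fixed points, which is why it transfers almost verbatim to the unbounded, quadratically growing case of Theorem~\ref{thm:well_posed_growth}.

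Two small corrections to your accounting of moments. First, the stability estimate $|\m_f[\rho]-\m_f[\hat\rho]|\le c_0W_2(\rho,\hat\rho)$ genuinely requires \emph{fourth} moments, not second: assumption \eqref{ass:1} makes the Lipschitz constant of $\om$ grow linearly in $|x|$, so after Cauchy--Schwarz the off-$W_2$ factor is $\bigl(\iint|x|^2(|x|+|\hat x|)^2d\pi\bigr)^{1/2}$. Second, your stated reason for needing $\rho_0\in\P_4$ is misplaced: $\tfrac{d}{dt}\E|\X_t|^2$ only involves $\E|\X_t-u_t|^2$, i.e.\ second moments, despite $\kappa_t$ being quadratic in $x$. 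The fourth moments are needed precisely for the stability lemma (and, in the paper, for the H\"older/compactness step). Since you carry the fourth-moment estimate anyway, this does not break your argument, but the justification should be fixed.
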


Before we prove the theorem, we discuss two results that not only facilitate the proof of Theorem~\ref{thm:well_posed_bounded}, but are also interesting in their own right. 

\begin{lemma}\label{lem:osc}
	Let $f$ satisfy Assumption~\ref{ass} and $\mu \in\P_2(\R^d)$ with $\int|x|^2d\mu\le K$. Then 
	\begin{align*}
	\frac{e^{-\alpha \underline f}}{\|\om\|_{L^1(\mu)}}\le \exp\bigl(\alpha c_u(1+ K)\bigr)=:c_K.
	\end{align*}
\end{lemma}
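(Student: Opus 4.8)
The plan is to bound $\|\om\|_{L^1(\mu)} = \int e^{-\alpha f}\,d\mu$ from below by a Jensen-type argument, since the reciprocal is what we need to control. First I would write $e^{-\alpha\underline f}/\|\om\|_{L^1(\mu)} = \left(\int e^{-\alpha(f(x)-\underline f)}\,d\mu\right)^{-1}$, so the claim is equivalent to the lower bound $\int e^{-\alpha(f(x)-\underline f)}\,d\mu \ge \exp(-\alpha c_u(1+K))$. Because $t\mapsto e^{-\alpha t}$ is convex, Jensen's inequality gives $\int e^{-\alpha(f(x)-\underline f)}\,d\mu \ge \exp\!\left(-\alpha\int (f(x)-\underline f)\,d\mu\right)$.

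Next I would invoke the second estimate in \eqref{ass:1}, namely $f(x)-\underline f \le c_u(1+|x|^2)$, to obtain $\int (f(x)-\underline f)\,d\mu \le c_u\int (1+|x|^2)\,d\mu = c_u\left(1 + \int |x|^2\,d\mu\right) \le c_u(1+K)$, using that $\mu$ is a probability measure and the moment bound $\int |x|^2\,d\mu \le K$. Since $e^{-\alpha t}$ is decreasing in $t$, plugging this into the Jensen bound yields $\int e^{-\alpha(f(x)-\underline f)}\,d\mu \ge \exp(-\alpha c_u(1+K))$, and taking reciprocals gives exactly $\dfrac{e^{-\alpha\underline f}}{\|\om\|_{L^1(\mu)}} \le \exp(\alpha c_u(1+K)) = c_K$, as desired.

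I do not expect any genuine obstacle here: the proof is a one-line application of Jensen's inequality combined with the quadratic growth bound on $f-\underline f$. The only point requiring a modicum of care is checking integrability so that Jensen applies — i.e. that $\int (f(x)-\underline f)\,d\mu$ is finite — but this is immediate from $f-\underline f \le c_u(1+|x|^2)$ together with $\mu\in\P_2(\R^d)$, and nonnegativity of $f-\underline f$ ensures the integral is well-defined in $[0,\infty)$. The convexity/monotonicity directions of $e^{-\alpha t}$ must be tracked correctly, but that is routine.
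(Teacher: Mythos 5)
Your proof is correct and is essentially identical to the paper's own argument: the paper likewise applies Jensen's inequality to the convex function $t\mapsto e^{-\alpha t}$ to get $e^{-\alpha\int(f-\underline f)\,d\mu}\le \int e^{-\alpha(f-\underline f)}\,d\mu$ and then invokes \eqref{ass:1} to bound $\int(f-\underline f)\,d\mu\le c_u(1+K)$. Your write-up just spells out the rearrangement and the integrability check that the paper leaves implicit.
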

\begin{proof}
	The proof follows from the Jensen inequality, which gives
	\[
	e^{-\alpha \int (f-\underline f)\,d\mu} \le \int e^{-\alpha(f-\underline f)}d\mu.
	\]
	A simple rearrangement of the previous inequality and using \eqref{ass:1} yields the required estimate.
\end{proof}

 \begin{lemma}\label{lem:stability}
	Let $f$ satisfy Assumption~\ref{ass} and $\mu,\hat\mu\in \P_2(\R^d)$ with	
	\[
	\int|x|^4d\mu,\quad \int|\hat x|^4d\hat\mu \le K.
	\]
	Then the following stability estimate holds
	\begin{align*}
	|\m_f[\mu]-\m_f[\hat \mu]| \le c_0W_2(\mu,\hat{\mu}),
	\end{align*}
	for a constant $c_0>0$ depending only on $\alpha$, $L_f$ and $K$.
\end{lemma}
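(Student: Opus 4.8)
The plan is to estimate the difference $\m_f[\mu]-\m_f[\hat\mu]$ by writing both first moments against a common reference and splitting the error into a ``numerator'' part and a ``denominator'' part, exactly in the spirit of the decomposition $I_1,I_2,I_3$ used in Lemma~\ref{lem:localLip}. Concretely, abbreviate $w=\om$, $\|w\|_\mu=\|w\|_{L^1(\mu)}$, and write
\[
\m_f[\mu]-\m_f[\hat\mu]
=\frac{1}{\|w\|_\mu}\int x\,w\,d\mu-\frac{1}{\|w\|_{\hat\mu}}\int \hat x\,w\,d\hat\mu
=\underbrace{\frac{1}{\|w\|_\mu}\int\!\!\int (x-\hat x)\,w(\hat x)\,d\pi}_{=:J_1}
+\underbrace{\frac{1}{\|w\|_\mu}\int\!\!\int \hat x\,\bigl(w(x)-w(\hat x)\bigr)d\pi}_{=:J_2}
+\underbrace{\Bigl(\tfrac{1}{\|w\|_\mu}-\tfrac{1}{\|w\|_{\hat\mu}}\Bigr)\int \hat x\,w\,d\hat\mu}_{=:J_3},
\]
where $\pi$ is an optimal coupling of $\mu$ and $\hat\mu$ for $W_2$. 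Each $J_\ell$ will be controlled by $W_2(\mu,\hat\mu)$.

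First I would record the two elementary ingredients. (i) Since $f$ is bounded below and satisfies the growth bound in \eqref{ass:1}, Lemma~\ref{lem:osc} gives $e^{-\alpha\underline f}/\|w\|_\mu\le c_K$ and likewise for $\hat\mu$ (using the second-moment bound implied by the fourth-moment bound); in particular $1/\|w\|_\mu\le c_K\,e^{\alpha\underline f}$. (ii) The weight $w=e^{-\alpha f}$ is locally Lipschitz with the quantitative bound coming from the first line of \eqref{ass:1}: for all $x,\hat x$,
\[
|w(x)-w(\hat x)|=\bigl|e^{-\alpha f(x)}-e^{-\alpha f(\hat x)}\bigr|
\le \alpha\, e^{-\alpha\underline f}\,|f(x)-f(\hat x)|
\le \alpha L_f\, e^{-\alpha\underline f}\,(|x|+|\hat x|)\,|x-\hat x|,
\]
using $|e^{-a}-e^{-b}|\le e^{-\min(a,b)}|a-b|$ and $f\ge\underline f$. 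With these, $J_1$ is bounded by $c_K e^{\alpha\underline f}\int|x-\hat x|\,w(\hat x)\,d\pi\le c_K\,W_2(\mu,\hat\mu)$ after Cauchy--Schwarz and $w\le e^{-\alpha\underline f}$. For $J_2$, insert (ii) and apply Cauchy--Schwarz twice: $|J_2|\le c_K e^{\alpha\underline f}\cdot\alpha L_f e^{-\alpha\underline f}\bigl(\int|\hat x|^2(|x|+|\hat x|)^2 d\pi\bigr)^{1/2}W_2(\mu,\hat\mu)$, and the first factor is finite and bounded by a constant depending on $\alpha,L_f,K$ because of the fourth-moment bounds on $\mu,\hat\mu$. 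For $J_3$, write $\tfrac{1}{\|w\|_\mu}-\tfrac{1}{\|w\|_{\hat\mu}}=\tfrac{\|w\|_{\hat\mu}-\|w\|_\mu}{\|w\|_\mu\|w\|_{\hat\mu}}$, bound $|\|w\|_\mu-\|w\|_{\hat\mu}|\le\int|w(x)-w(\hat x)|d\pi\le \alpha L_f e^{-\alpha\underline f}(\int(|x|+|\hat x|)^2 d\pi)^{1/2}W_2(\mu,\hat\mu)$ via (ii), use $1/(\|w\|_\mu\|w\|_{\hat\mu})\le c_K^2 e^{2\alpha\underline f}$ from (i), and bound the remaining factor $|\int \hat x\,w\,d\hat\mu|\le e^{-\alpha\underline f}\int|\hat x|\,d\hat\mu\le e^{-\alpha\underline f}K^{1/4}$. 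Collecting the three bounds yields $|\m_f[\mu]-\m_f[\hat\mu]|\le c_0 W_2(\mu,\hat\mu)$ with $c_0=c_0(\alpha,L_f,K)$.

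The only genuinely delicate point is bookkeeping the moment dependence: the estimates for $J_2$ and $J_3$ require integrals of the form $\int(|x|+|\hat x|)^k\,d\pi$ for $k\le 2$ (and, in $J_2$, $\int|\hat x|^2(|x|+|\hat x|)^2 d\pi$, which is $\le\bigl(\int|\hat x|^4\bigr)^{1/2}\bigl(\int(|x|+|\hat x|)^4\bigr)^{1/2}$ by Cauchy--Schwarz), and these are finite precisely because of the hypothesis $\int|x|^4 d\mu,\int|\hat x|^4 d\hat\mu\le K$ — this is why the lemma is stated with a fourth-moment bound rather than a second-moment one. Everything else is Jensen, Cauchy--Schwarz, and the elementary Lipschitz estimate for $e^{-\alpha f}$; no compactness or fixed-point argument is needed.
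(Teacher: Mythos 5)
Your proposal is correct and follows essentially the same route as the paper: the same three-term decomposition (a displacement term, a weight-difference term, and a normalization-difference term) against an optimal coupling, with Lemma~\ref{lem:osc} controlling the denominators, the quadratic Lipschitz bound in \eqref{ass:1} controlling $|e^{-\alpha f(x)}-e^{-\alpha f(\hat x)}|$, and Cauchy--Schwarz together with the fourth-moment hypothesis closing the estimate. The only differences are cosmetic (e.g.\ whether the weight-difference term carries $x$ or $\hat x$).
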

\begin{proof}
	Taking the difference, we obtain
	\begin{align*}
	\m_f[\mu]-\m_f[\hat \mu] = \iint \left[\frac{x\,\om(x)}{\|\om\|_{L^1(\mu)}} - \frac{\hat x\,\om(\hat x)}{\|\om\|_{L^1(\hat\mu)}}\right] d\pi =: \iint \big(h(x) - h(\hat x)\big)\, d\pi,
	\end{align*}
	where $\pi\in\Pi(\mu,\hat{\mu})$ is an arbitrary coupling of $\mu$ and $\hat\mu$. Observe that the integrand on the right may be written as
	\begin{align*}
	h(x)-h(\hat x) &= \frac{(x-\hat x)\,\om(\hat x)}{\|\om\|_{L^1(\mu)}} + \frac{x(\om(x)-\om(\hat x))}{\|\om\|_{L^1(\mu)}} +\frac{\iint \big(\om(\hat x) - \om(x)\big) d\pi_t}{\|\om\|_{L^1(\mu)}\|\om\|_{L^1(\hat\mu)}}\,\hat x\,\om(\hat x).
	\end{align*}
	Under the assumption \eqref{ass:1} on $f$ and Lemma~\ref{lem:osc}, the terms may be estimated by
	\begin{align*}
	|h(x)-h(\hat x)| &\le c_K|x-\hat x| + c_K \alpha L_f|x|(|x|+|\hat x|)|x-\hat x| \\
	&\hspace*{12em}+ c_K^2\alpha L_f|\hat x|\iint (|x|+|\hat x|)|x-\hat x|\,d\pi.
	\end{align*}
	Using this estimate, we further obtain
	\begin{align*}
	|\m_f[\mu]-\m_f[\hat \mu]| &\le c_K\iint (1 + \alpha L_f|x|(|x|+|\hat x|))|x-\hat x|\,d\pi \\
	&\hspace*{6em}+ c_K^2\alpha L_f\left(\int |\hat x|\,d\hat\mu\right)\iint (|x|+|\hat x|)|x-\hat x|\,d\pi \\
	&\le c_K\Bigl(1 + \alpha L_f(1+c_K) p_K \Bigr)\biggl(\iint |x-\hat x|^2d\pi\biggr)^{\frac{1}{2}},
	\end{align*}
	where we applied the H\"older inequality in the second line and $p_K$ is a polynomial in $K$. Finally,
	optimizing over all couplings $\pi$ concludes the proof.
\end{proof}

\begin{proof}[Proof of Theorem~\ref{thm:well_posed_bounded}]
	{\bf \em Step 1:} For some given $u\in\C([0,T],\R^d)$, we may uniquely solve the SDE
	\begin{align}\label{eq:linear_sde}
	dY_t = -\lambda(Y_t - u_t)dt + \sigma|Y_t-u_t|dW_t,\qquad \law(Y_0)=\rho_0,
	\end{align}
	for some fixed initial measure $\rho_0\in\P_4(\R^d)$, which induces $\nu_t=\law(Y_t)$. Since $Y\in\C([0,T],\R^d)$, we obtain $\nu\in\C([0,T],\P_2(\R^d))$, which satisfies the following Fokker--Planck equation
	\begin{align}\label{eq:linear_fp}
	\frac{d}{dt}\int \varphi\, d\nu_t = \int \Big((\sigma^2/2)|x-u_t|^2\Delta\varphi - \lambda(x-u_t)\cdot\nabla\varphi\Big)d\nu_t,
	\end{align}
	for all $\varphi\in\C_b^2(\R^d)$.
	Setting $\m_f[\nu]\in\C([0,T],\R^d)$ provides the self-mapping property of the map 
	\[
	\mathcal T\colon \C([0,T],\R^d)\to \C([0,T],\R^d);\;\; u\mapsto \mathcal T u=\m_f[\nu],
	\]
	for which we show to be compact.
	
	{\bf \em Step 2:} Since $\rho_0\in\P_4(\R^d)$, standard theory of SDEs (see e.g.~Chapter~7 of \cite{arnold1974stochastic}) provides a fourth-order moment estimate for solutions to \eqref{eq:linear_sde} of the form 
	\[
	 \E|Y_t|^4 \le (1+\E |Y_0|^4)e^{ct},
	\]
	for some constant $c>0$. In particular, $\sup_{t\in[0,T]}\int |x|^4d\nu_t\le K$ for some $K<\infty$. On the other hand, for any $t>s$, $t,s\in(0,T)$, the It\^o isometry yields
	\begin{align*}
	\E|Y_t-Y_s|^2 &\le 2\lambda^2 |t-s|\E\int_s^t |Y_\tau - u_\tau|^2 d\tau + 2\sigma^2 \E\int_s^t |Y_\tau-u_\tau|^2 d\tau \\
	&\le 4(\lambda^2T+\sigma^2)(K+\|u\|_\infty^2)|t-s|=: c\,|t-s|,
	\end{align*}
	and therefore, $W_2(\nu_t,\nu_s) \le c\,|t-s|^{\frac{1}{2}}$, for some constant $c>0$. Applying Lemma~\ref{lem:stability} with $\mu=\nu_t$ and $\hat{\mu}=\nu_s$, we obtain
	\[
	 |\m_f[\nu_t]-\m_f[\nu_s]| \le c_0W_2(\nu_t,\nu_s)\le c_0\,c\,|t-s|^{\frac{1}{2}},
	\]
	which provides the H\"older continuity of $t\mapsto\m_f[\nu_t]$ with exponent $1/2$, and thereby the compactness of $\mathcal T$ due to the compact embedding $\C^{0,1/2}([0,T],\R^d)\hookrightarrow\C([0,T],\R^d)$.
	
	{\bf \em Step 3:} Now let $u\in \C([0,T],\R^d)$ satisfy $u=\tau\mathcal Tu$ for $\tau\in[0,1]$. In particular, there exists $\rho\in\C([0,T],\P_2(\R^d))$ satisfying \eqref{eq:linear_fp} such that $u=\tau\m_f[\rho]$. Due to the boundedness assumption on $f$, we have for all $t\in(0,T)$ that
	\begin{align}\label{eq:uniform_bounded}
	 |u_t|^2=\tau^2|\m_f[\rho_t]|^2 \le \tau^2 e^{\alpha(\overline f-\underline f)}\int |x|^2 d\rho_t.
	\end{align}
	Therefore, a computation of the second moment gives
	\begin{align*}
	\frac{d}{dt}\int |x|^2 d\rho_t &= \int \Big(d\sigma^2|x-u_t|^2 - 2\lambda (x-u_t) \cdot x \Big)d\rho_t \\
	&= \int \Big((d\sigma^2-2\lambda)|x|^2 + 2\gamma\, x\cdot u_t + d\sigma^2|u_t|^2 \Big)d\rho_t \\
	&\le (d\sigma^2-2\lambda+|\gamma|)\int |x|^2d\rho_t + (d\sigma^2+|\gamma|) |u_t|^2 \le c_\lambda\int |x|^2d\rho_t,
	\end{align*}
	where $\gamma=\lambda-d\sigma^2$ and $c_\lambda=(d\sigma^2+|\gamma|)(1+e^{\alpha(\overline f-\underline f)})$. From Gronwall's inequality we easily deduce
	\[
	\int |x|^2 d\rho_t \le \left(\int |x|^2 d\rho_0\right)e^{c_\lambda t},
	\]
	and consequently also an estimate for $\|u\|_\infty$ via \eqref{eq:uniform_bounded}. In particular, there is a constant $q>0$ for which $\|u\|_\infty<q$. We conclude the proof by applying the Leray--Schauder fixed point theorem,\cite[Chapter~11]{gilbarg2015elliptic} which provides a fixed point $u$ for the mapping $\mathcal T$ and thereby a solution of \eqref{eq:nonlinear} (respectively \eqref{eq:fokker-planck}).
	
	 {\bf \em Step 4:} As for uniqueness, we first note that a fixed point $u$ of $\mathcal T$ satisfies $\|u\|_\infty<q$. Hence, the fourth-order moment estimate provided in {\bf\em Step 2} holds and $\sup_{t\in[0,T]}\int |x|^4d\rho_t\le K<\infty$. Now suppose we have two fixed points $u$ and $\hat u$ with 
	 \[
	  \|u\|_\infty, \|\hat u\|_\infty <q,\qquad \sup_{t\in[0,T]}\int |x|^4d\rho_t,\;\; \sup_{t\in[0,T]}\int |x|^4d\hat\rho_t\le K,
	 \]
	 and their corresponding processes $Y_t$, $\hat Y_t$ satisfying \eqref{eq:linear_sde} respectively. Then taking the difference $z_t:=Y_t-\hat Y_t$ for the same Brownian path gives
	 \[
	  z_t = z_0 -\lambda\int_0^t z_s\,ds + \lambda \int_0^t (u_s-\hat u_s)\,ds + \sigma \int_0^t \Bigl( |Y_s-u_s| - |\hat Y_s-\hat u_s|\Bigr) dW_s.
	 \]
	 Squaring on both sides, taking the expectation and applying the It\^o isometry yields
	 \[
	  \E|z_t|^2 \le 2\E|z_0|^2 + 8(\lambda^2 t + \sigma^2)\int_0^t \E|z_s|^2 ds + 4\lambda^2t\int_0^t |\m_f[\rho_t]-\m_f[\hat{\rho}_t]|^2 ds.
	 \]
	 Since Lemma~\ref{lem:stability} provides the estimate
	 \begin{align}\label{eq:unique}
	  |\m_f[\rho_t]-\m_f[\hat{\rho}_t]|\le c_0 W_2(\rho_t,\hat{\rho}_t) \le c_0 \sqrt{\E|z_t|^2},
	 \end{align}
	 we further obtain
	 \[
	  \E|z_t|^2 \le 2\E|z_0|^2 + 4\big((2 + c_0^2)\lambda^2 t + 2\sigma^2\big)\int_0^t \E|z_s|^2 ds.
	 \]
	 Therefore, applying Gronwall's inequality and using the fact that $\E|z_0|^2= 0$ yields $\E|z_t|^2=0$ for all $t\in[0,T]$. In particular, $\|u-\hat u\|_\infty=0$, i.e., $u\equiv \hat u$ due to \eqref{eq:unique}.
\end{proof}

\subsection{Cost functions with quadratic growth at infinity} In this subsection, we allow for cost functions that have quadratic growth at infinity. More precisely, we suppose the following:

There exist constants $M>0$ and $c_l>0$ such that
\begin{align}
	f(x)- \underline f \ge c_l|x|^2\quad\text{for\, $|x|\ge M$}\tag{A2}\label{ass:2}.
\end{align}
We provide a similar result as in the boundedness case under the assumption \eqref{ass:2}.

\begin{theorem}\label{thm:well_posed_growth}
	Let $f$ satisfy Assumption~\ref{ass} and quadratic growth \eqref{ass:2}, and $\rho_0\in\P_4(\R^d)$. Then there exists a unique nonlinear process $\X\in \C([0,T],\R^d)$, $T>0$, satisfying 
	\begin{align*}
	d\X_t = -\lambda(\X_t - \m_f[\rho_t])\,dt + \sigma|\X_t - \m_f[\rho_t]|dW_t,\qquad \rho_t=\law(\X_t),
	\end{align*}
	in the strong sense, and $\rho\in \C([0,T],\P_2(\R^d))$ satisfies the corresponding Fokker--Planck equation \eqref{eq:fokker-planck} (in the weak sense) with $\lim\nolimits_{t\to 0}\rho_t=\rho_0\in\P_2(\R^d)$.
\end{theorem}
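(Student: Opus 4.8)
The plan is to mimic the Leray--Schauder fixed point argument of Theorem~\ref{thm:well_posed_bounded}, with the only new difficulty being that the boundedness of $f$ was used in exactly one place---the a priori bound \eqref{eq:uniform_bounded} on $|u_t|$ in terms of the second moment of $\rho_t$---so the task reduces to replacing that step using the quadratic growth assumption \eqref{ass:2} instead of the upper bound $\overline f$. Steps~1 and~2 (solvability of the linear SDE \eqref{eq:linear_sde} for given $u\in\C([0,T],\R^d)$, the fourth-moment bound $\sup_{t}\int|x|^4d\nu_t\le K$, the H\"older-in-time continuity of $t\mapsto\m_f[\nu_t]$ via Lemma~\ref{lem:stability}, and hence compactness of $\mathcal T$ on $\C([0,T],\R^d)$ through $\C^{0,1/2}\hookrightarrow\C^0$) go through verbatim, since they never used boundedness of $f$, only Assumption~\ref{ass}. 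Likewise Step~4 (uniqueness) is unchanged: it relies solely on Lemma~\ref{lem:stability} and the fact that any fixed point has uniformly bounded fourth moments, which will again follow once Step~3 is in place.

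So the heart of the proof is a new Step~3: one must show that if $u=\tau\mathcal Tu=\tau\m_f[\rho]$ for some $\tau\in[0,1]$, then $\|u\|_\infty$ is bounded by a constant independent of $\tau$. The key observation is that, without any upper bound on $f$, one still has the trivial pointwise estimate
\begin{align*}
|\m_f[\rho_t]| = \left|\frac{\int x\,e^{-\alpha f}d\rho_t}{\int e^{-\alpha f}d\rho_t}\right| \le \frac{\int |x|\,e^{-\alpha(f-\underline f)}d\rho_t}{\int e^{-\alpha(f-\underline f)}d\rho_t} \le c_K \int |x|\,d\rho_t,
\end{align*}
by Lemma~\ref{lem:osc}, where $c_K$ depends on the second moment $K$ of $\rho_t$. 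This already shows $|u_t|^2 \le c_K^2\int|x|^2d\rho_t$, but unlike \eqref{eq:uniform_bounded} the constant $c_K$ now itself depends on $K$, so one cannot immediately close the second-moment Gronwall estimate; the bound would be self-referential. The remedy is to first obtain an \emph{a priori} uniform-in-$\tau$ second-moment bound directly from the dynamics by exploiting \eqref{ass:2}: compute $\frac{d}{dt}\int|x|^2d\rho_t$ as in Step~3 of Theorem~\ref{thm:well_posed_bounded}, but now bound the bad term $d\sigma^2|u_t|^2$ by splitting the integral defining $\m_f[\rho_t]$ over $\{|x|\le M\}$ and $\{|x|>M\}$. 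On $\{|x|\le M\}$ the contribution to $|\m_f[\rho_t]|$ is at most $M$; on $\{|x|>M\}$ one uses $e^{-\alpha(f(x)-\underline f)}\le e^{-\alpha c_l|x|^2}$ to show $\int_{|x|>M}|x|\,e^{-\alpha(f-\underline f)}d\rho_t \le \int_{|x|>M}|x|e^{-\alpha c_l|x|^2}d\rho_t$, which is bounded by a constant $C(\alpha,c_l,M)$ independent of $\rho_t$ (the function $r\mapsto re^{-\alpha c_l r^2}$ is bounded on $\R_+$), while the denominator $\int e^{-\alpha(f-\underline f)}d\rho_t \ge \int_{|x|\le M}e^{-\alpha(f-\underline f)}d\rho_t \ge e^{-\alpha c_u(1+M^2)}\rho_t(B_M)$; if $\rho_t(B_M)$ is bounded below this gives $|\m_f[\rho_t]|\le C$ uniformly, hence $\|u\|_\infty\le C$ uniformly in $\tau$, and Gronwall on the second moment closes as before.

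The main obstacle is therefore controlling the denominator $\|\om\|_{L^1(\rho_t)}$ from below uniformly in $t$ and $\tau$, equivalently ensuring $\rho_t$ does not escape to infinity; I expect the cleanest route is to prove a uniform-in-$\tau$ bound on $\sup_{t\in[0,T]}\int|x|^2d\rho_t$ first (via a Gronwall inequality whose driving constant depends only on $\lambda,\sigma,d,\alpha,c_l,c_u,M$ and $T$ but not on $\tau$, using that $|u_t|\le|\m_f[\rho_t]|$ is always controlled by $c_K\int|x|\,d\rho_t$ with the now-$K$-dependent $c_K$---this still gives a valid, if $K$-dependent, a priori inequality that one bootstraps on a short time interval and then iterates, or alternatively one argues directly that the map $K\mapsto$ (resulting moment bound) has a fixed point), and only then deduce the uniform $\|u\|_\infty$ bound. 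Once $\|u\|_\infty<q$ uniformly in $\tau$, the Leray--Schauder theorem \cite[Chapter~11]{gilbarg2015elliptic} applies exactly as in Theorem~\ref{thm:well_posed_bounded} to produce a fixed point, and uniqueness follows from the argument in Step~4 unchanged.
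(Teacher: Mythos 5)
You have the right architecture (Steps 1, 2 and 4 of Theorem~\ref{thm:well_posed_bounded} carry over verbatim; only the a priori bound on $\|u\|_\infty$ in Step 3 must be redone) and you have correctly diagnosed the difficulty: every lower bound on the denominator $\|\om\|_{L^1(\rho_t)}$ that you invoke --- Jensen's inequality as in Lemma~\ref{lem:osc}, or Chebyshev via $\rho_t(B_M)\ge 1-K/M^2$ --- depends on the second moment $K$ of $\rho_t$ itself, so your estimate has the shape $|\m_f[\rho_t]|\le M + C(\alpha,c_l,M)\,e^{\alpha c_u(1+K)}$ and is self-referential. The problem is that your proposed ways out do not close the argument. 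Feeding this bound into the second-moment computation yields a differential inequality of the form $\frac{d}{dt}K_t \le a K_t + b\,e^{2\alpha c_u K_t}$, which is superlinear in $K_t$ and therefore admits finite-time blow-up; the ``bootstrap on a short interval and iterate'' strategy only gives existence up to some possibly finite $T^*$ (the interval lengths shrink and their sum can converge), not on $[0,T]$ for arbitrary $T$. Similarly, the map $K\mapsto(\text{resulting moment bound})$ is increasing and exponential, so there is no reason for it to have a fixed point. The crux of the theorem --- a bound on $|\m_f[\rho_t]|$ that is \emph{not} exponential in the second moment of $\rho_t$ --- is missing.

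The paper supplies precisely this ingredient as Lemma~\ref{lem:moment}: under \eqref{ass:1} and \eqref{ass:2} one has $\int|x|^2d\eta^\alpha\le b_1+b_2\int|x|^2d\mu$ with $b_1,b_2$ depending only on $M$, $c_u$, $c_l$ and $\alpha$ --- an \emph{affine} bound with constants independent of $\mu$ --- which dominates $|\m_f[\mu]|^2$ by Jensen and lets the Gronwall argument close exactly as in the bounded case. The proof of that lemma is where the real work lies and it is not the naive splitting you propose: one first derives the superlevel-set ratio estimate $\eta^\alpha(\{f-f_*\ge k\})\le e^{-\alpha(k-\ell)}\,\mu(\{f-f_*\ge k\})/\mu(\{f-f_*<\ell\})$, which pairs the exponential loss from bounding the denominator below against the exponential gain in the numerator; one then decomposes $\{|x|\ge R_0\}$ into dyadic annuli $A_i\subset\{f-f_*\ge c_l2^{i-1}R_0^2\}$ (using \eqref{ass:2}), controls $\mu(\{f-f_*\ge k\})$ by Markov's inequality, and chooses the base radius $R_0^2=M^2+2(c_u/c_l)\bigl(1+\int|x|^2d\mu\bigr)$ \emph{adaptively} in $\mu$ so that $\mu(\{f-f_*<\ell\})$ stays bounded away from zero. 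It is this adaptive choice that converts the would-be exponential dependence on the second moment into an affine one. Without this lemma or an equivalent substitute, your Step 3 does not go through.
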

\begin{proof}
 The proof is a slight modification of {\bf \em Step 3} in the proof of Theorem~\ref{thm:well_posed_bounded}. Since {\bf \em Steps 1}, {\bf \em 2} and {\bf \em 4} remain the same, we only show {\bf \em Step 3}. 
 
 {\bf \em Step 3:} Let $u\in \C([0,T],\R^d)$ satisfy $u=\tau\mathcal Tu$ for $\tau\in[0,1]$, i.e., there exists $\rho\in\C([0,T],\P_2(\R^d))$ satisfying \eqref{eq:linear_fp} such that $u=\tau\m_f[\rho]$. Due to Lemma~\ref{lem:moment} below, we have that
 \begin{align}\label{eq:uniform_growth}
 |u_t|^2=\tau^2|\m_f[\rho_t]|^2 \le \tau^2\int |x|^2 d\eta_t^{\alpha} \le \tau^2\biggl(b_1 + b_2\int |x|^2 d\rho_t\biggr),
 \end{align}
 for the constants $b_1$ and $b_2$ given in \eqref{eq:moment}. Therefore, a similar computation of the second moment estimate as above gives
 \begin{align*}
 \frac{d}{dt}\int |x|^2 d\rho_t &\le (d\sigma^2-2\lambda+|\gamma|)\int |x|^2 d\rho_t + \tau(d\sigma^2+|\gamma|) \biggl(b_1 + b_2 \int|x|^2 d\rho\biggr) \\
 &\le (d\sigma^2+|\gamma|)\, b_1 + (d\sigma^2+|\gamma|)(1+b_2)\int |x|^2 d\rho_t,
 \end{align*}
 which by Gronwall's inequality yields
 \[
   \int |x|^2 d\rho_t \le e^{c_\lambda t}\int |x|^2 d\rho_0 + \frac{b_1}{1+b_2}\big(e^{c_\lambda t}-1\big),
 \]
 with $c_\lambda=(d\sigma^2+|\gamma|)(1+b_2)$, and consequently also an estimate for $\|u\|_\infty$ via \eqref{eq:uniform_growth}. In particular, there is a constant $q>0$ for which $\|u\|_\infty<q$. We conclude the proof by using the same argument as in {\bf \em Step 3} in the proof of Theorem \ref{thm:well_posed_bounded}.
\end{proof}

\begin{lemma}\label{lem:moment}
 Let $f$ satisfy Assumption~\ref{ass} and \eqref{ass:2}, and $\mu\in \P_2(\R^d)$. Then
 \begin{align}\label{eq:moment}
  \int|x|^2d\eta^\alpha \le b_1 + b_2\int|x|^2 d\mu,\qquad \eta^\alpha=\om\,\mu/\|\om\|_{L^1(\mu)}
 \end{align}
 with constants
 \[
  b_1 = M^2 + b_2,\qquad b_2 = 2\frac{c_u}{c_l}\biggl(1 + \frac{1}{\alpha c_l}\frac{1}{M^2}\biggr),
 \]
 depending only on $M$, $c_u$ and $c_l$.
\end{lemma}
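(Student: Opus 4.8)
The plan is to estimate $\int|x|^2\,d\eta^\alpha$ by splitting $\R^d$ into the ball $\{|x|\le M\}$ and its complement, handling the complement by means of the quadratic lower bound \eqref{ass:2}. On the ball, nothing more than $\eta^\alpha\in\P(\R^d)$ is needed:
\[
\int_{\{|x|\le M\}}|x|^2\,d\eta^\alpha \le M^2\,\eta^\alpha(\{|x|\le M\}) \le M^2,
\]
which is responsible for the $M^2$ in $b_1$.

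On $\{|x|>M\}$, assumption \eqref{ass:2} gives $|x|^2\le c_l^{-1}(f(x)-\underline f)$, and since $f-\underline f\ge 0$ everywhere we may enlarge the domain of integration to get
\[
\int_{\{|x|>M\}}|x|^2\,d\eta^\alpha \le \frac{1}{c_l}\int_{\{|x|>M\}}(f-\underline f)\,d\eta^\alpha \le \frac{1}{c_l}\int_{\R^d}(f-\underline f)\,d\eta^\alpha.
\]
The problem is thereby reduced to bounding $\int(f-\underline f)\,d\eta^\alpha$ by $\int(f-\underline f)\,d\mu$, after which the growth bound in \eqref{ass:1} converts the latter into a bound in terms of $\int|x|^2\,d\mu$.

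For the reduced problem, write $h:=f-\underline f\ge 0$, so that $\eta^\alpha=e^{-\alpha h}\mu/Z$ with $Z:=\int e^{-\alpha h}\,d\mu = e^{\alpha\underline f}\|\om\|_{L^1(\mu)}\in(0,1]$. What is needed is the correlation inequality
\[
\int h\,e^{-\alpha h}\,d\mu \le \left(\int h\,d\mu\right)\left(\int e^{-\alpha h}\,d\mu\right),
\]
valid because $h\mapsto h$ is nondecreasing whereas $h\mapsto e^{-\alpha h}$ is nonincreasing: setting $h_0:=-\frac{1}{\alpha}\log Z\ge 0$ one has $(h(x)-h_0)(e^{-\alpha h(x)}-Z)\le 0$ for every $x$, and integrating this against $\mu$ and expanding yields the claim. (Equivalently, $\alpha\mapsto\int h\,d\eta^\alpha=-\frac{d}{d\alpha}\log\int e^{-\alpha h}\,d\mu$ is nonincreasing, since $\log\int e^{-\alpha h}\,d\mu$ is convex in $\alpha$ by Cauchy--Schwarz, so its value at $\alpha$ is at most its value at $\alpha=0$, namely $\int h\,d\mu$; this uses $\int h\,d\mu<\infty$, which follows from \eqref{ass:1} and $\mu\in\P_2(\R^d)$.) Consequently $\int(f-\underline f)\,d\eta^\alpha\le\int(f-\underline f)\,d\mu\le c_u\int(1+|x|^2)\,d\mu=c_u(1+\int|x|^2\,d\mu)$, and combining the three displays gives
\[
\int|x|^2\,d\eta^\alpha \le M^2 + \frac{c_u}{c_l} + \frac{c_u}{c_l}\int|x|^2\,d\mu,
\]
which is of the asserted form $b_1+b_2\int|x|^2\,d\mu$ with $b_1=M^2+b_2$ and $b_2=c_u/c_l$; in particular the estimate holds a fortiori with the larger constant $b_2=\frac{2c_u}{c_l}(1+\frac{1}{\alpha c_l M^2})$ stated in the lemma, which the authors presumably obtain by a more hands-on estimate of the outer integral that trades the correlation inequality for a pointwise bound on $t\,e^{-\alpha t}$ valid on $\{|x|>M\}$, where $\alpha(f-\underline f)\ge\alpha c_l M^2$.

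The step I expect to be the crux is the control of $\int(f-\underline f)\,d\eta^\alpha$. The normalization $Z$ (proportional to $\|\om\|_{L^1(\mu)}$) can be made arbitrarily small by letting $\mu$ concentrate where $f$ is large, so one cannot bound $1/Z$ below by a $\mu$-independent constant — doing so would reintroduce the exponential factor $c_K$ of Lemma~\ref{lem:osc} and destroy the linear-in-$\int|x|^2\,d\mu$ form of the bound. The correlation-inequality viewpoint avoids this because the normalization cancels: $\eta^\alpha$ never weights large values of $f$ — hence, through \eqref{ass:2}, large $|x|$ — any more heavily than $\mu$ already does.
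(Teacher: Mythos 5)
Your proof is correct, and it takes a genuinely different route from the paper's. The paper does not reduce to $\int (f-\underline f)\,d\eta^\alpha$ at all: it works with a dyadic decomposition of $\{|x|\ge R_0\}$ into annuli $A_i=\{2^{(i-1)/2}R_0\le |x|<2^{i/2}R_0\}$, bounds $\eta^\alpha(A_i)$ by $\eta^\alpha$ of superlevel sets of $f-f_*$ via the two-sided estimate $\eta^\alpha(\{f-f_*\ge k\})\le e^{-\alpha(k-\ell)}\mu(\{f-f_*\ge k\})/\mu(\{f-f_*<\ell\})$, controls the numerator by Markov's inequality together with \eqref{ass:1}, and then makes a careful choice $R_0^2=M^2+2(c_u/c_l)(1+\int|x|^2d\mu)$ so that the resulting series sums to the stated constants. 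Your argument replaces all of this with the single correlation inequality $\int h\,e^{-\alpha h}d\mu\le(\int h\,d\mu)(\int e^{-\alpha h}d\mu)$ for $h=f-\underline f\ge0$, whose pointwise proof $(h(x)-h_0)(e^{-\alpha h(x)}-Z)\le0$ is airtight and whose integrability hypothesis $\int h\,d\mu<\infty$ you correctly supply from \eqref{ass:1} and $\mu\in\P_2(\R^d)$. Your diagnosis of the crux is also exactly right: any attempt to bound $1/\|\om\|_{L^1(\mu)}$ from below by a $\mu$-independent constant brings back the exponential factor of Lemma~\ref{lem:osc}, and both proofs are engineered to let the normalization cancel — the paper through the ratio $\mu(\{f-f_*\ge k\})/\mu(\{f-f_*<\ell\})$, you through the correlation inequality. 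What your approach buys is brevity and strictly better constants, $b_2=c_u/c_l$ (manifestly $\alpha$-independent, sharpening the paper's remark that $b_2$ can be made $\alpha$-independent only for $\alpha\ge1$) and $b_1=M^2+c_u/c_l$; since both are dominated by the constants in \eqref{eq:moment}, the lemma as stated follows a fortiori, as you note. What the paper's layer-cake machinery buys in exchange is the intermediate tail estimate on $\eta^\alpha(\{f-f_*\ge k\})$, which quantifies the exponential concentration of the $\alpha$-weighted measure on sublevel sets and is of independent interest for the Laplace-principle heuristics of the introduction, though it is not reused elsewhere in the proof of Lemma~\ref{lem:moment}.
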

\begin{proof}
 We begin by looking at $\eta^\alpha(\{f-f_*\ge k\})$ for some $k>0$. A simple computation gives
 \[
  \eta^\alpha(\{f-f_*\ge k\}) = \frac{1}{\|\om\|_{L^1(\mu)}}\int_{\{f-f_*\ge k\}} e^{-\alpha f} d\mu \le \frac{e^{-\alpha(f_*+k)}}{\|\om\|_{L^1(\mu)}}\mu(\{f-f_*\ge k\}).
 \]
 On the other hand,
 \[
  \|\om\|_{L^1(\mu)} = \int e^{-\alpha f}\,d\mu \ge \mu(\{f-f_*< \ell\})e^{-\alpha(f_*+\ell)},
 \]
 for any $\ell>0$. Consequently, we obtain our first estimate
 \begin{align}\label{eq:estimate_1}
\eta^\alpha(\{f-f_*\ge k\}) \le e^{-\alpha(k-\ell)}\frac{\mu(\{f-f_*\ge k\})}{\mu(\{f-f_*< \ell\})},
 \end{align}
 which holds for any $k,\ell>0$, as long as the terms on the right-hand side are finite. 
 
 Now let us estimate the second moment of $\eta^\alpha$. For some $R_0\ge M$ to be determined later,
 \begin{align*}
 \int|x|^2 d\eta^\alpha 
 &\le R_0^2\eta^\alpha(B_{R_0}) + \int_{B_{R_0}^c}|x|^2 d\eta^\alpha = R_0^2 + \int_{B_{R_0}^c}\Big(|x|^2-R_0^2\Big) d\eta^\alpha \\
 &= R_0^2 + \sum_{i=1}^\infty\int_{A_i}\Big(|x|^2-R_0^2\Big) d\eta^\alpha \le R_0^2 + R_0^2\sum_{i=1}^\infty\big(2^{i}-1\big)\eta^\alpha(A_i),
 \end{align*}
 where $A_i:=\{ 2^{(i-1)/2}R_0 \le |x| < 2^{i/2} R_0 \}$, $i\in\N$. However, since \eqref{ass:2} tells us that $f-f_*\ge c_l|x|^2$ for all $|x|\ge R_0\ge M$, we find that $A_i\subset \{ f-f_*\ge c_l 2^{i-1}R_0^2 \}$ and therefore $\eta^\alpha(A_i)\le \eta^\alpha(\{ f-f_*\ge c_l 2^{i-1}R_0^2 \})$ for all $i\in\N$. From estimate \eqref{eq:estimate_1} and the choice $\ell = c_l 2^{i-2}R_0^2$, we further obtain
 \begin{align}\label{eq:estimate_2}
 \begin{aligned}
 \int|x|^2 d\eta^\alpha &\le R_0^2 + R_0^2\sum_{i=1}^\infty\big(2^{i}-1\big)\eta^\alpha(\{ f-f_*\ge c_l 2^{i-1}R_0^2 \}) \\
 &\le R_0^2 + R_0^2\sum_{i=1}^\infty\big(2^{i}-1\big)e^{-\alpha c_l R_0^2 2^{i-2}}\frac{\mu(\{ f-f_*\ge c_l 2^{i-1}R_0^2 \})}{\mu(\{ f-f_* < c_l 2^{i-2}R_0^2 \})}.
 \end{aligned}
 \end{align}
We are now left with estimating the last term on the right.
 
 The Markov inequality and \eqref{ass:1} yields
 \[
 \mu(\{ f-f_*\ge k \}) \le \frac{1}{k} \int|f-f_*|\,d\mu \le \frac{1}{k}c_u\biggl(1 + \int|x|^2 d\mu\biggr) =: \frac{c_u}{k}m,
 \]
 for any $k>0$. Furthermore, since
 \[
 \mu(\{f-f_*< k\}) = 1 - \mu(\{f-f_*\ge k\}) \ge 1 - \frac{c_u}{k}m,
 \]
 we have that
 \begin{align*}
 \frac{\mu(\{ f-f_*\ge k \})}{\mu(\{ f-f_* < k/2 \})} \le \frac{1}{2}\frac{m}{k/(2c_u) - m}.
 \end{align*}
 Choosing $k= c_l 2^{i-1}R_0^2$, we obtain
 \begin{align*}
 \frac{\mu(\{ f-f_*\ge c_l 2^{i-1}R_0^2 \})}{\mu(\{ f-f_* < c_l 2^{i-2}R_0^2 \})} \le \frac{1}{2}\frac{m}{(c_l/c_u) 2^{i-2}R_0^2 - m}.
 \end{align*}
 At this point, we can choose
 \[
 R_0^2 = M^2 + 2\frac{c_u}{c_l}m>M^2,
 \]
 to further obtain
 \begin{align*}
 \frac{\mu(\{ f-f_*\ge c_l 2^{i-1}R_0^2 \})}{\mu(\{ f-f_* < c_l 2^{i-2}R_0^2 \})} \le \frac{1}{2}\frac{m}{(c_l/c_u) 2^{i-2}M^2 + (2^{i-1}-1)m} \le 2\frac{c_u}{c_l}\frac{m}{2^iM^2}.
 \end{align*}
 Inserting this into \eqref{eq:estimate_2} yields
 \begin{align*}
 \int|x|^2 d\eta^\alpha &\le R_0^2 + 2\frac{c_u}{c_l}\frac{m}{M^2} R_0^2\sum_{i=1}^\infty\big(1-2^{-i}\big)e^{-\alpha c_l R_0^2 2^{i-2}}.
 \end{align*}
 A coarse estimate of the summation on the right may be given by
 \begin{align*}
 \sum_{i=1}^\infty\big(1-2^{-i}\big)e^{-\alpha c_l R_0^2 2^{i-2}} &\le \sum_{i=1}^\infty e^{-\alpha c_l R_0^2 2^{i-2}} \le \int_{\frac12}^\infty e^{-\alpha c_l R_0^2 s}\,ds 
 \le \frac{1}{\alpha c_l R_0^2},
 \end{align*}
 which finally gives an estimate of the form
 \[
 \int|x|^2 d\eta^\alpha \le M^2 + 2\frac{c_u}{c_l}\biggl(1 + \frac{1}{\alpha c_l}\frac{1}{M^2}\biggr) \biggl(1 + \int|x|^2 d\mu\biggr) =: b_1 + b_2 \int|x|^2 d\mu,
 \]
 thereby concluding the proof.
\end{proof}

\begin{remark}
	Observe that the estimate \eqref{eq:moment} provided in Lemma~\ref{lem:moment} does not blow up as $\alpha\to\infty$, in contrast to estimate \eqref{eq:uniform_bounded} used in the proof of the bounded case. In fact, the constant $b_2$ may be chosen to be independent of $\alpha$ for $\alpha\ge 1$. 
\end{remark}

\subsection{Moment estimates for the stochastic empirical measure}
For the solution ${\bf X}^{(N)}\in \C([0,T],\R^d)^N$, $N\in\N$ of the particle system \eqref{eq:particle}, we denote by 
\[
\rho_t^N = \frac{1}{N}\sum\nolimits_{i=1}^N \delta_{X_t^{(i,N)}}\in\P_2(\R^d),
\]
the empirical measure corresponding to ${\bf X}_t^{(N)}$ for every $t\in[0,T]$, where $\delta_x$ is the Dirac measure at $x\in\R^d$. Note that since ${\bf X}_t^{(N)}$ is a random variable, so is its empirical measure $\rho_t^N$.

\begin{lemma}\label{lem:moment_particle}
	Let $f$ satisfy Assumption~\ref{ass} and either (1) boundedness, or (2) quadratic growth at infinity \eqref{ass:2}, and $\rho_0\in \P_{2p}(\R^d)$, $p\ge 1$. Further, let ${\bf X}^{(N)}$, $N\in\N$ be the solution of the particle system \eqref{eq:particle} with $\rho_0^{\otimes N}$-distributed initial data ${\bf X}_0^{(N)}$ and $\rho^N$ the corresponding empirical measure. Then there exists a constant $K>0$, independent of $N$, such that
	\begin{align}\label{eq:uniform_particle}
	\sup\nolimits_{t\in[0,T]}\E\int |x|^{2p} d\rho_t^N,\qquad \sup\nolimits_{t\in[0,T]}\E|m_t^N|^{2p} \le K,
	\end{align}
	and consequently also the estimates
	\[
	 \sup\nolimits_{t\in[0,T]}\E\int |x|^{2} d\eta_t^{\alpha,N},\qquad \sup\nolimits_{t\in[0,T]} \E|X_t^{(i,N)}|^{2p} \le K\qquad \text{for\, $i=1,\ldots,N$},
	\]
	for the same constant $K$.
\end{lemma}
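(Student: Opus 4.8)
The plan is to establish the $2p$-th moment bound for the empirical measure by an It\^o-calculus argument on $|{\bf X}_t^{(N)}|^{2p}$, carefully tracking the $N$-dependence of all constants so that nothing blows up in the mean-field limit. First I would apply the It\^o formula to the map ${\bf x}\mapsto|{\bf x}|^{2p}$ along the solution ${\bf X}_t^{(N)}$ of \eqref{eq:system}, obtaining
\[
\frac{d}{dt}\E|{\bf X}_t^{(N)}|^{2p} = \E\Bigl[\,2p\,|{\bf X}_t^{(N)}|^{2p-2}\bigl(-\lambda\,{\bf X}_t^{(N)}\cdot{\bf F}_N({\bf X}_t^{(N)})\bigr) + p\sigma^2|{\bf X}_t^{(N)}|^{2p-2}\,\mathrm{trace}({\bf M}_N{\bf M}_N^\top) + 2p(p-1)\sigma^2|{\bf X}_t^{(N)}|^{2p-4}\,{\bf X}_t^{(N),\top}{\bf M}_N{\bf M}_N^\top{\bf X}_t^{(N)}\Bigr].
\]
The point is that both ${\bf X}\cdot{\bf F}_N({\bf X})$ and $\mathrm{trace}({\bf M}_N{\bf M}_N^\top)=d\sum_i|F_N^i({\bf X})|^2$ involve the mean $\m_t^N=\m_f[\rho_t^N]$, so the crude bounds from the well-posedness section (which gave $b_N\sim N$) must be replaced by bounds in terms of $\int|x|^2d\rho_t^N$ rather than $|{\bf X}|^2$. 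Concretely, writing $F_N^i({\bf X})=X^i-\m^N$, one has $\frac1N\sum_i|F_N^i({\bf X})|^2 = \frac1N\sum_i|X^i|^2 - |\m^N|^2 \le \int|x|^2d\rho^N$, and likewise $\frac1N\sum_i X^i\cdot F_N^i({\bf X}) = \int|x|^2d\rho^N - |\m^N|^2\ge 0$ with the matching upper bound. The key input is the bound on $|\m_t^N|$: in case (2) this is exactly Lemma~\ref{lem:moment} applied to $\mu=\rho_t^N$, giving $|\m_t^N|^2\le b_1+b_2\int|x|^2d\rho_t^N$; in case (1) it is the elementary $|\m_t^N|^2\le e^{\alpha(\overline f-\underline f)}\int|x|^2d\rho_t^N$ used in \eqref{eq:uniform_bounded}. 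Either way one gets a closed differential inequality $\frac{d}{dt}\E\int|x|^{2p}d\rho_t^N \le C_1 + C_2\,\E\int|x|^{2p}d\rho_t^N$ with $C_1,C_2$ independent of $N$ (using that $\int|x|^{2p-2}d\rho^N\le 1+\int|x|^{2p}d\rho^N$ by Jensen, after moving from $|{\bf X}|^{2p}=N^p(\int|x|^2d\rho^N)^p$-type quantities to the normalized moments $\int|x|^{2p}d\rho^N$).

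Let me organize the steps in order. Step one: rewrite everything in terms of $\rho_t^N$, noting $\E|{\bf X}_t^{(N)}|^2 = N\,\E\int|x|^2d\rho_t^N$ and more generally handling the $2p$-th moment via $\int|x|^{2p}d\rho_t^N$, which by Jensen dominates $(\int|x|^2d\rho_t^N)^p$; it is cleaner to run the It\^o argument directly on $\phi(x)=|x|^{2p}$ tested against $\rho_t^N$, i.e. on $\E\frac1N\sum_i|X_t^{(i,N)}|^{2p}$, so that the SDE for each coordinate $X_t^{(i,N)}$ with drift $-\lambda(X^i-\m^N)$ and diffusion $\sigma|X^i-\m^N|$ is used. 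Step two: apply It\^o to $|X_t^{(i,N)}|^{2p}$, average over $i$, take expectations, and estimate the drift and diffusion contributions using $|X^i-\m^N|\le|X^i|+|\m^N|$ together with Young's inequality to absorb cross terms. Step three: bound $\E|\m_t^N|^{2p}$ by the appropriate lemma --- Lemma~\ref{lem:moment} in case (2), the exponential bound in case (1) --- combined with Jensen's inequality $|\m_t^N|^{2p}\le(\int|x|^2d\rho_t^N)^p\le\int|x|^{2p}d\rho_t^N$ (or its analogue after the affine bound on $|\m_t^N|^2$), yielding a Gronwall-type inequality for $g(t):=\E\int|x|^{2p}d\rho_t^N$ with $N$-independent coefficients. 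Step four: Gronwall gives $g(t)\le K$ on $[0,T]$ with $K$ depending only on $p,T,\lambda,\sigma,d,\alpha$, the constants in \eqref{ass:1}, \eqref{ass:2} (or $\overline f-\underline f$), and $\int|x|^{2p}d\rho_0$, which is finite since $\rho_0\in\P_{2p}(\R^d)$ and the initial data is $\rho_0^{\otimes N}$-distributed so $\E\int|x|^{2p}d\rho_0^N=\int|x|^{2p}d\rho_0$.

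The remaining consequences are immediate corollaries. The bound on $\E|m_t^N|^{2p}$ follows from the bound on $\E\int|x|^{2p}d\rho_t^N$ via Jensen (together with the affine bound on $|m_t^N|^2$ in case (2), which only changes $K$). The bound on $\E\int|x|^2d\eta_t^{\alpha,N}$ follows either from Lemma~\ref{lem:moment} with $\mu=\rho_t^N$ in case (2) or from $\int|x|^2d\eta^\alpha\le e^{\alpha(\overline f-\underline f)}\int|x|^2d\rho_t^N$ in case (1), in both cases taking expectations. Finally, the per-particle bound $\E|X_t^{(i,N)}|^{2p}\le K$: by exchangeability of the particles (the initial data is i.i.d.\ and the dynamics \eqref{eq:particle} are symmetric under permutation of labels), $\E|X_t^{(i,N)}|^{2p}$ is independent of $i$, hence equals $\frac1N\sum_j\E|X_t^{(j,N)}|^{2p}=\E\int|x|^{2p}d\rho_t^N\le K$. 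I expect the main obstacle to be organizing the It\^o estimate so that the constants are genuinely $N$-free --- this is exactly where the earlier crude bound failed --- which is handled by always passing through the normalized moments $\int|x|^{2p}d\rho_t^N$ and invoking Lemma~\ref{lem:moment} (the $\alpha$-uniformity of $b_2$ there is what makes the estimate robust) rather than the naive particle-level bounds; a secondary technical point is justifying the It\^o computation for $p\ge 1$ noninteger or large, which is standard once the a priori moment bound of the previous section guarantees the relevant expectations are finite on $[0,T]$.
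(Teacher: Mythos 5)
Your proposal follows essentially the same route as the paper: a per-particle moment estimate (integral form of the SDE plus It\^o isometry in the paper, It\^o's formula on $|x|^{2p}$ in your version), averaging over $i$ to pass to $\E\int|x|^{2p}d\rho_t^N$, the $N$-uniform bound $|\m_t^N|^2\le c_1+c_2\int|x|^2d\rho_t^N$ from Lemma~\ref{lem:moment} (resp.\ the exponential bound in the bounded case), Jensen to lift this to power $2p$, and Gronwall. One caution: the intermediate identities $\frac1N\sum_i|X^i-\m^N|^2=\int|x|^2d\rho^N-|\m^N|^2$ and $\frac1N\sum_iX^i\cdot(X^i-\m^N)=\int|x|^2d\rho^N-|\m^N|^2$ are false since $\m^N$ is the \emph{weighted} mean rather than the arithmetic mean of the particles, but this slip is not load-bearing because your Step~two correctly reverts to $|X^i-\m^N|\le|X^i|+|\m^N|$ with Young's inequality, exactly as in the paper.
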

\begin{proof}
	Let ${\bf X}^{(N)}$, $N\in\N$ be a solution of the particle system \eqref{eq:particle}. Using the inequality $(a+b)^q\le 2^{q-1}(a^q+b^q)$, $q\ge 1$ and the It\^o isometry, it is easy to see that the estimate
	\begin{align}\label{eq:moment_particle_i}
		\begin{aligned}
			\E|X_t^{(i,N)}|^{2p} &= 2^{2p-1}\E|X_0^{(i,N)}|^{2p} \\
			&+ 2^{3(2p-1)}(\lambda^{2p}T^{p}+\sigma^{2p})T^{p-1}\int_0^t \E|X_s^{(i,N)}|^{2p}+\E|m_s^N|^{2p} ds,
		\end{aligned}
	\end{align}
	holds for each $i=1,\ldots,N$. Summing the previous inequality over $i=1,\ldots, N$, dividing by $N$ and using the linearity of the expectation gives
	\begin{align}\label{eq:moment_particle}
	\begin{aligned}
	\E\int|x|^{2p}d\rho_t^N &\le 2^{2p-1}\E\int|x|^{2p}d\rho_0^N \\
	&\hspace*{-1em}+ 2^{3(2p-1)}(\lambda^{2p}T^{p}+\sigma^{2p})T^{p-1}\int_0^t \biggl[\E\int|x|^{2p}d\rho_s^N +\E|\m_s^N|^{2p}\biggr]ds.
	\end{aligned}
	\end{align}
	Following the strategy given in Section~\ref{sec:mesoscopic}, we obtain for cost functions $f$ satisfying Assumption~\ref{ass} and either boundedness or \eqref{ass:2}, the estimate
	\begin{align}\label{eq:moment_particle_m}
	 |\m_s^N|^2 \le \int|x|^2d\eta_s^{\alpha,N} \le c_1 + c_2\, \int |x|^2 d\rho_s^N,
	\end{align}
	for appropriate constants $c_1$ and $c_2$, independent of $N$, where by construction $\m_t^N = \int x\,d\eta_t^{\alpha,N}$ with $\eta_t^{\alpha,N} = \om \rho_t^N/\|\om\|_{L^1(\rho_t^N)}\in\P(\R^d)$. Therefore, we further obtain
	\[
	|\m_s^N|^{2p} \le \biggl(c_1 + c_2\, \int |x|^2 d\rho_s^N\biggr)^p \le 2^{p-1}\biggl(c_1^p + c_2^p\, \int |x|^{2p} d\rho_s^N\biggr).
	\]
	Inserting this into \eqref{eq:moment_particle} and applying the Gronwall inequality provides a constant $K_p>0$, independent of $N$, such that $\sup_{t\in[0,T]}\E\int |x|^{2p} d\rho_t^N\le K_p$ holds, and consequently also
	\[
	 \sup\nolimits_{t\in[0,T]}\E|m_t^N|^{2p}\le 2^{p-1}(c_1^p + c_2^p\, K_p),
	\]
	which concludes the proof of the estimates in \eqref{eq:uniform_particle} by choosing $K$ sufficiently large. The other two estimates easily follow by \eqref{eq:moment_particle_m} and by applying the Grownwall inequality on \eqref{eq:moment_particle_i}, respectively.
\end{proof}

\begin{remark}
 Despite having the estimates provided in Lemma~\ref{lem:moment_particle}, we are unable to prove a mean-field limit result for the interacting particle system \eqref{eq:particle} towards the nonlinear process \eqref{eq:nonlinear} by means of classical tools (see e.g.~\cite{sznitman1991topics}). At this moment, we only postulate its corresponding mean-field equation \eqref{eq:fokker-planck}, for which the numerical simulations have indicated to be true.
\end{remark}


\section{Large Time Behavior and Consensus Formation}\label{sec_lt}

We finally arrive at the most important part of the paper, i.e., we provide sufficient conditions on $f$ such that {\em uniform consensus formation} happens. More precisely, we say that uniform consensus occurs when 
\[
\rho_t \longrightarrow \delta_{\tilde x}\quad\text{as}\;\; t\to\infty,
\]
for some $\tilde x\in\R^d$ possibly depending on $\rho_0$. 

In fact, in the framework of global optimization, we would like to further have that $\tilde x = x_*=\arg\min f$. In other words, we want that $\rho_t$ concentrates at the global minimum of $f$. Unlike, the deterministic case, the formation of nonuniform consensus, i.e., multiple opinions in the limit $t\to\infty$, in the stochastic model cannot occur.\cite{doi:10.1142/S0218202517400061} Hence, it is expected that uniform consensus is formed, whenever concentration happens. We will see that this is the case.

\begin{assumption}\label{ass:3}
Throughout this section we will assume that $f \in \mathcal C^2(\R^d)$ satisfies additionally
\begin{enumerate}
	\item $\inf f > 0$.
	\item $\|\nabla^2 f\|_\infty\le c_f$ and there exists constants $c_0,c_1>0$, such that
	\[
	\Delta f \le c_0 + c_1|\nabla f|^2\qquad\text{in\; $\R^d$}.
	\]
\end{enumerate}
\end{assumption}

%

Let us point out that from the viewpoint of global optimization we can change the value of the function $f$ outside a large ball in a way that condition (2) of Assumption~\ref{ass:3} is satisfied as soon as it does not change the point where the global minimum is achieved. These cases include interesting functions in applications. 

\subsection{Concentration estimate}
Let us begin by denoting the expectation and variance of $\rho_t$ by
\[
E(\rho_t) := \int x\,d\rho_t \quad \mbox{and} \quad V(\rho_t):= \frac{1}{2}\int |x- E(\rho_t)|^2\,d\rho_t.
\]
A simple computation of the evolution of its variance gives
\begin{equation*}
\frac{d}{dt} V(\rho_t)=\frac{d}{dt} \frac{1}{2}\int |x- E(\rho_t)|^2\,d\rho_t = -2\lambda V(\rho_t) + (d\sigma^2/2)\int|x-\m_f[\rho_t]|^2 d\rho_t.
\end{equation*}
To estimate the last term on the right, we apply Jensen's inequality to obtain
\begin{align}\label{eq:var22}
\int |x-\m_f[\rho_t]|^2 d\rho_t \le \frac{\iint |x-y|^2 \om d\rho_t(x)d\rho_t(y)}{\|\om\|_{L^1(\rho_t)}} \le 2 \frac{e^{-\alpha \underline f}}{\|\omega^\alpha_f\|_{L^1(\rho_t)}}V(\rho_t),
\end{align}
and therefore
\begin{align}\label{eq:estimate_var}
 \frac{d}{dt} V(\rho_t) \le -\Big(2\lambda - d\sigma^2 e^{-\alpha \underline f}/\|\omega^\alpha_f\|_{L^1(\rho_t)}\Big)V(\rho_t).
\end{align}
\begin{remark}
 If $f$ were bounded, then an obvious way to obtain concentration is to use the estimate $\|\omega^\alpha_f\|_{L^1(\rho_t)}\ge e^{-\alpha \overline f}$ to obtain
 \[
  \frac{d}{dt} V(\rho_t) \le -\Big(2\lambda - d\sigma^2 e^{\alpha (\overline f-\underline f)}\Big)V(\rho_t).
 \]
 However, since $\overline f >\underline f$, we have that $e^{\alpha(\overline f-\underline f)}\to\infty$ as $\alpha\to\infty$, and we would have to choose $\lambda\gg 1$ sufficiently large in order to obtain concentration. In the next subsection, we will see that this is not desirable since $\alpha$ has to be chosen large to have a good approximation of the global minimizer.
\end{remark}

In order to understand how $\|\om\|_{L^1(\rho_t)}$ evolves, we study its evolution given by
\begin{align*}
 \frac{d}{dt}\|\omega^\alpha_f\|_{L^1(\rho_t)} &= \alpha \lambda \int (\nabla f(x)-\nabla f(m_f[\rho_t])) \cdot (x - m_f[\rho_t])\, \omega^\alpha_fd\rho_t \\
 &+\frac{\sigma^2}{2}\int \lt(\alpha^2|\nabla f(x)|^2 - \alpha \Delta f(x) \rt)|x - m_f[\rho_t]|^2\omega^\alpha_f d\rho_t =: I_1 + I_2
\end{align*}
where we used the fact that
\[
\int (x - \m_f[\rho_t]) \,\omega^\alpha_f d\rho_t = 0,
\]
for the first term. 
\begin{lemma}
 Under Assumption~\ref{ass:3} for $f$ and $\alpha\ge c_1$, we have 
 \begin{align}\label{eq:estimate_om}
  \frac{d}{dt}\|\omega^\alpha_f\|_{L^1(\rho_t)}^2 \ge - b_1 V(\rho_t),
  \end{align}
  with $b_1=b_1(\alpha,\lambda,\sigma)=2\alpha e^{-2\alpha \underline f}( c_0\sigma^2 + 2\lambda c_f)$. 
  
  Note that from Assumption~\ref{ass:3}, we see that $b_1\to 0$ as $\alpha\to\infty$.
\end{lemma}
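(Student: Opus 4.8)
The plan is to estimate the two pieces $I_1$ and $I_2$ of $\frac{d}{dt}\|\om\|_{L^1(\rho_t)}$ separately from below in terms of $V(\rho_t)$, and then to exploit the observation that differentiating the \emph{square} $\|\om\|_{L^1(\rho_t)}^2$ produces the factor $2\|\om\|_{L^1(\rho_t)}$, which cancels an inverse power of $\|\om\|_{L^1(\rho_t)}$ that is forced on us by the second-moment estimate \eqref{eq:var22}. This cancellation is the whole point: it is what prevents the right-hand side from degenerating when the mass $\|\om\|_{L^1(\rho_t)}$ is small.

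First I would bound $I_1$. Since $\|\nabla^2 f\|_\infty\le c_f$, the map $\nabla f$ is globally $c_f$-Lipschitz, so $(\nabla f(x)-\nabla f(\m_f[\rho_t]))\cdot(x-\m_f[\rho_t])\ge -c_f|x-\m_f[\rho_t]|^2$. Using $\om(x)=e^{-\alpha f(x)}\le e^{-\alpha\underline f}$ and then \eqref{eq:var22}, namely $\int |x-\m_f[\rho_t]|^2 d\rho_t\le 2e^{-\alpha\underline f}V(\rho_t)/\|\om\|_{L^1(\rho_t)}$, yields
\[
I_1 \ge -\alpha\lambda c_f e^{-\alpha\underline f}\int |x-\m_f[\rho_t]|^2\,d\rho_t \ge -\frac{2\alpha\lambda c_f e^{-2\alpha\underline f}}{\|\om\|_{L^1(\rho_t)}}\,V(\rho_t).
\]

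Next I would bound $I_2$. Assumption~\ref{ass:3}(2) gives $\alpha^2|\nabla f(x)|^2-\alpha\Delta f(x)\ge \alpha(\alpha-c_1)|\nabla f(x)|^2-\alpha c_0\ge -\alpha c_0$, where the hypothesis $\alpha\ge c_1$ is used precisely to discard the nonnegative term $\alpha(\alpha-c_1)|\nabla f|^2$. Hence the integrand of $I_2$ is at least $-\tfrac{\sigma^2}{2}\alpha c_0\,|x-\m_f[\rho_t]|^2\om(x)$, and the same two steps ($\om\le e^{-\alpha\underline f}$ followed by \eqref{eq:var22}) give
\[
I_2 \ge -\frac{\sigma^2\alpha c_0 e^{-2\alpha\underline f}}{\|\om\|_{L^1(\rho_t)}}\,V(\rho_t).
\]
Adding the two bounds and multiplying by $2\|\om\|_{L^1(\rho_t)}>0$, the denominator cancels and we arrive at
\[
\frac{d}{dt}\|\om\|_{L^1(\rho_t)}^2 = 2\|\om\|_{L^1(\rho_t)}\bigl(I_1+I_2\bigr) \ge -2\alpha e^{-2\alpha\underline f}\bigl(c_0\sigma^2+2\lambda c_f\bigr)V(\rho_t) = -b_1 V(\rho_t),
\]
which is the claimed inequality; the concluding remark that $b_1\to 0$ as $\alpha\to\infty$ is immediate since $\underline f=\inf f>0$ forces $\alpha e^{-2\alpha\underline f}\to 0$.

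There is no genuine obstacle here beyond careful bookkeeping with the weight $\om$; the one conceptual point worth stressing is that one should resist estimating $I_1$ and $I_2$ directly against $\frac{d}{dt}\|\om\|_{L^1(\rho_t)}$, because those lower bounds carry the dangerous factor $1/\|\om\|_{L^1(\rho_t)}$, which is only neutralized after passing to the square.
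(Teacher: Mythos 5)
Your proof is correct and follows essentially the same route as the paper: the same decomposition into $I_1$ and $I_2$, the same use of the Lipschitz bound on $\nabla f$ and of Assumption~\ref{ass:3}(2) with $\alpha\ge c_1$ to discard the nonnegative term, the same application of \eqref{eq:var22}, and the same cancellation of $1/\|\om\|_{L^1(\rho_t)}$ by differentiating the square. The only (immaterial) difference is that you apply \eqref{eq:var22} to $I_1$ and $I_2$ separately before summing, whereas the paper sums first and applies it once.
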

\begin{proof}
 From the assumptions on $f$, we obtain the following estimates
 \begin{align*}
 I_1 &\ge - \alpha\lambda c_f e^{-\alpha \underline f} \int |x - m_f[\rho_t]|^2 d\rho_t\\
 I_2 &\ge \alpha(\alpha-c_1)\frac{\sigma^2}{2}\int |\nabla f(x)|^2|x - m_f[\rho_t]|^2 \omega^\alpha_f\,d\rho_t \\
 &\hspace*{12em}- \alpha c_0\frac{\sigma^2}{2}e^{-\alpha \underline f}\int |x - m_f[\rho_t]|^2 d\rho_t.
 \end{align*}
 When $\alpha\ge c_1$, the first term in the estimate for $I_2$ is non-negative. Hence, we obtain
 \begin{align*}
 \frac{d}{dt}\|\omega^\alpha_f\|_{L^1(\rho_t)} &\ge -\alpha e^{-\alpha \underline f}( c_0\sigma^2/2 + \lambda c_f)\int |x - m_f[\rho_t]|^2 d\rho_t \\
 &\ge -\alpha e^{-2\alpha \underline f}( c_0\sigma^2 + 2\lambda c_f)\frac{V(\rho_t)}{\|\omega^\alpha_f\|_{L^1(\rho_t)}},
 \end{align*}
 where the second inequality follows from \eqref{eq:var22}. We obtain \eqref{eq:estimate_om} from the fact that 
 \[
  \frac{1}{2}\frac{d}{dt}\|\omega^\alpha_f\|_{L^1(\rho_t)}^2=\|\omega^\alpha_f\|_{L^1(\rho_t)}\frac{d}{dt}\|\omega^\alpha_f\|_{L^1(\rho_t)} \ge -\alpha e^{-2\alpha \underline f}( c_0\sigma^2 + 2\lambda c_f)V(\rho_t),
 \]
 thereby concluding the proof.
\end{proof}

We now have the ingredients to show the concentration of $\rho_t$. In particular, we show that the estimates \eqref{eq:estimate_var} and \eqref{eq:estimate_om} provide the means to identify assumptions on the parameters $\alpha,\lambda$ and $\sigma$, for which we obtain the convergence $V(\rho_t)\to0$ as $t\to\infty$ at an exponential rate.

\begin{theorem}\label{thm:concentration}
 Let $f$ satisfy Assumption~\ref{ass:3}. Furthermore let the parameters $\alpha,\lambda$ and $\sigma$ satisfy
 \[
  b_1 < \frac{3}{4},\qquad 2\lambda b_0^2 - K - 2d\sigma^2 b_0e^{-\alpha \underline f} \ge 0, 
 \]
 with $K=V(\rho_0)$ and $b_0=\|\omega^\alpha_f\|_{L^1(\rho_0)}$. Then $V(\rho_t)\le V(\rho_0)e^{-q t}$ with
 \[
  q = 2\big(\lambda - (d\sigma^2/b_0)e^{-\alpha \underline f}\big) \ge K/b_0^2.
 \]
 In particular, there exists a point $\tilde x\in\R^d$ for which $$E(\rho_t)\to \tilde x \quad\text{and}\quad \m_f[\rho_t]\to \tilde x \quad\text{as}\quad t\to\infty.$$
\end{theorem}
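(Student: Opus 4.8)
The plan is to run a continuation (bootstrap) argument for the quantity $\|\om\|_{L^1(\rho_t)}$, coupling the two differential inequalities \eqref{eq:estimate_var} and \eqref{eq:estimate_om}. Two preliminary observations. First, dividing the second parameter hypothesis by $b_0^2>0$ shows it is exactly the inequality $q\ge K/b_0^2$, which in particular forces $q\ge 0$. Second, $b_0=\|\om\|_{L^1(\rho_0)}=\int e^{-\alpha f}\,d\rho_0>0$. Throughout I take $\alpha\ge c_1$ so that \eqref{eq:estimate_om} is available, and I may assume $K=V(\rho_0)>0$: if $V(\rho_0)=0$ then $\rho_0=\delta_{\tilde x}$, which the dynamics propagate, and the statement is trivial.

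The key mechanism is a self-improving estimate. Suppose $\|\om\|_{L^1(\rho_s)}\ge b_0/2$ for all $s\in[0,t]$. Then $1/\|\om\|_{L^1(\rho_s)}\le 2/b_0$, so the bracket in \eqref{eq:estimate_var} is bounded below by $2\lambda-2d\sigma^2 e^{-\alpha\underline f}/b_0=q\ge 0$; since $V\ge 0$ this gives $\frac{d}{ds}V(\rho_s)\le -qV(\rho_s)$ on $[0,t]$, hence by Gronwall $V(\rho_s)\le K e^{-qs}$ there. Consequently $\int_0^t V(\rho_s)\,ds\le K/q\le b_0^2$, the last step using $q\ge K/b_0^2$. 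Plugging this into the integrated form of \eqref{eq:estimate_om}, $\|\om\|_{L^1(\rho_t)}^2\ge b_0^2-b_1\int_0^t V(\rho_s)\,ds\ge (1-b_1)b_0^2$, and since $b_1<3/4$ this is the \emph{strict} bound $\|\om\|_{L^1(\rho_t)}^2>b_0^2/4$. To globalize, let $T^*:=\sup\{T\ge 0:\|\om\|_{L^1(\rho_s)}>b_0/2\text{ for all }s\in[0,T]\}$. The map $t\mapsto\|\om\|_{L^1(\rho_t)}=\int\om\,d\rho_t$ is continuous (as $\om$ is bounded continuous and $\rho\in\C([0,\infty),\P_2(\R^d))$) and equals $b_0$ at $t=0$, so $T^*>0$. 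If $T^*<\infty$, applying the estimate above on $[0,s]$ for every $s<T^*$ and letting $s\uparrow T^*$ gives $\|\om\|_{L^1(\rho_{T^*})}^2>b_0^2/4$; by continuity $\|\om\|_{L^1(\rho_s)}>b_0/2$ persists on $[T^*,T^*+\varepsilon)$, contradicting maximality. Hence $T^*=\infty$, so $V(\rho_t)\le V(\rho_0)e^{-qt}$ for all $t\ge 0$, with $\|\om\|_{L^1(\rho_t)}>b_0/2$ throughout.

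It remains to produce $\tilde x$. Differentiating the first moment gives $\frac{d}{dt}E(\rho_t)=-\lambda\big(E(\rho_t)-\m_f[\rho_t]\big)$, and since $\eta_t^\alpha$ is a probability measure we may write $E(\rho_t)-\m_f[\rho_t]=-\int(x-E(\rho_t))\,d\eta_t^\alpha$; by Cauchy--Schwarz, $\om\le e^{-\alpha\underline f}$, and $\|\om\|_{L^1(\rho_t)}>b_0/2$,
\[
 |E(\rho_t)-\m_f[\rho_t]|^2\le\int|x-E(\rho_t)|^2\,d\eta_t^\alpha\le\frac{2e^{-\alpha\underline f}}{\|\om\|_{L^1(\rho_t)}}V(\rho_t)\le\frac{4e^{-\alpha\underline f}}{b_0}K e^{-qt}.
\]
Thus $|\tfrac{d}{dt}E(\rho_t)|\le C e^{-qt/2}$ with $C=2\lambda\sqrt{K e^{-\alpha\underline f}/b_0}$, so $t\mapsto E(\rho_t)$ is Cauchy as $t\to\infty$ and converges to some $\tilde x\in\R^d$, with $|E(\rho_t)-\tilde x|\le (2C/q)e^{-qt/2}$; then $|\m_f[\rho_t]-\tilde x|\le|\m_f[\rho_t]-E(\rho_t)|+|E(\rho_t)-\tilde x|\to 0$ as well. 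One moreover obtains $W_2(\rho_t,\delta_{\tilde x})^2=2V(\rho_t)+|E(\rho_t)-\tilde x|^2\to 0$, i.e.\ the full uniform consensus $\rho_t\to\delta_{\tilde x}$.

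The only genuinely delicate point, and hence the main obstacle, is closing the bootstrap: one needs the improved bound on $\|\om\|_{L^1(\rho_t)}^2$ to be \emph{strict}, which is precisely why the hypotheses ask for $b_1<3/4$ rather than $b_1\le 3/4$, and why $q$ is built from the threshold value $b_0/2$ instead of the running value $\|\om\|_{L^1(\rho_t)}$. A secondary, routine technical point is that Theorems~\ref{thm:well_posed_bounded} and~\ref{thm:well_posed_growth} furnish the solution only on finite intervals $[0,T]$; since every constant above is independent of $T$, the a priori decay propagates and the solution extends to $[0,\infty)$.
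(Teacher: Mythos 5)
Your proof is correct and follows essentially the same route as the paper: a continuation argument coupling \eqref{eq:estimate_var} and \eqref{eq:estimate_om}, in which $b_1<3/4$ secures the strict lower bound $\|\om\|_{L^1(\rho_t)}>b_0/2$ and the second parameter hypothesis secures $q\ge K/b_0^2$ so that $\int_0^\infty V(\rho_s)\,ds\le b_0^2$, followed by the same $L^1$-in-time argument for $E(\rho_t)$ and $\m_f[\rho_t]$. The only cosmetic difference is that you run the bootstrap directly on the lower bound for $\|\om\|_{L^1(\rho_t)}$ rather than on the set where $V(\rho_s)e^{\gamma s}<K+\ve$, which lets you dispense with the paper's $\ve$-regularization (and your Cauchy--Schwarz step for $\int|x-\m_f[\rho_s]|\,d\rho_s$ is in fact tidier than the paper's).
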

\begin{remark}
 Since $b_1\to 0$ and $e^{-\alpha\underline f}\to 0$ as $\alpha\to\infty$, the set of parameters for which the above conditions are satisfied is non-empty.
\end{remark}

\begin{proof}[Proof of Theorem~\ref{thm:concentration}] 
 Let $\gamma=K/b_0^2$. Choose any $\ve >0$ and set
 \[
 \mathcal{T}^\ve := \Bigl\{ t > 0 \,:\, V(\rho_s)e^{\gamma s} < K_\ve  \quad \mbox{for} \quad s \in [0,t) \Bigr\},
 \]
 where $K_\ve := K + \ve$. Then, by continuity of $V(\rho_t)$, we get $\mathcal{T}^\ve \neq \emptyset$. Set $\mathcal{T}^\ve_* := \sup \mathcal{T}^\ve$, and we claim $\mathcal{T}^\ve_* = \infty$. Suppose not, i.e., $\mathcal{T}^\ve_* < \infty$. Then this yields
 \[
 \lim\nolimits_{t \nearrow \mathcal{T}^\ve_*} V(\rho_t)e^{\gamma t} = K_\ve .
 \]
 On the other hand, it follows from \eqref{eq:estimate_om} that for $t < \mathcal{T}^\ve_*$, we have
 \begin{align*}
 \|\om\|_{L^1(\rho_t)}^2 &\geq b_0^2 - b_1\int_0^t V(\rho_s)\,ds > b_0^2 - b_1K_\ve\int_0^t e^{-\gamma s}\,ds \\
 &= b_0^2\lt(1 - b_1\frac{K_\ve}{K}\rt) + \frac{b_1K_\ve}{\gamma}e^{-\gamma \mathcal{T}^\ve_*}.
 \end{align*}
 Since $b_1 < 3/4$ and $K_\ve \to K$ as $\ve \to 0$, there exists $\ve_0 > 0$ such that $1 - b_1 K_\ve/K \geq 1/4$ for $0 < \ve \leq \e_0$. Thus we obtain that for $t < \mathcal{T}^\ve_*$ with $0 < \ve \leq \ve_0$
 \[
 \|\om\|_{L^1(\rho_t)} \geq \frac{b_0}{2} + \xi_\ve \quad \mbox{for some} \quad \xi_\ve > 0.
 \]
 Inserting this into \eqref{eq:estimate_var} gives
 \begin{align*}
 \frac{d}{dt} V(\rho_t) \le -2\lt(\lambda - \frac{d\sigma^2}{b_0 + 2\xi_\e} e^{-\alpha \underline f}\rt)V(\rho_t).
 \end{align*}
 Applying Gronwall's inequality yields
 \[
 V(\rho_t)e^{q_\ve t} \le K  \quad \mbox{where} \quad q_\ve = 2\lt(\lambda - \frac{d\sigma^2}{b_0 + 2\xi_\ve} e^{-\alpha \underline f}\rt),
 \]
 for $t < \mathcal{T}^\ve_*$ with $0 < \ve \leq \ve_0$. On the other hand, taking the limit $t \nearrow \mathcal{T}^\ve_*$ to the above inequality gives
\[
K_\ve  =  \lim\nolimits_{t \to \mathcal{T}^\ve_*-} V(\rho_t)e^{\gamma t} <  \lim\nolimits_{t \to \mathcal{T}^\ve_*-} V(\rho_t)e^{q_\ve t} =  K < K_\e,
\]
where we used the second condition, which gives $q_\ve > \gamma$ for all $0 < \ve \leq \ve_0$. This is a contradiction and implies that $\mathcal{T}^\ve_* = \infty$. Finally, by taking the limit $\ve \to 0$, we complete the proof.

%
 For the second part of the statement, we first observe that the expectation of $\rho_t$ satisfies
 \begin{align}\label{eq:mean1}
 \frac{d}{dt}E(\rho_t) = \frac{d}{dt}\int x\,d\rho_t = -\lambda \int (x-\m_f[\rho_t])\,d\rho_t.
 \end{align}
 Taking the absolute value of the equation above and then integrating in time yields 
 \begin{align*}
 \int_0^t \left|\frac{d}{ds} E(\rho_s)\right| ds &\leq \lambda\int_0^t \int |x-\m_f[\rho_s]|\,d\rho_s\,ds \le 4 (\lambda/b_0) e^{-\alpha\underline f} V(\rho_0)\int_0^t e^{-qs} ds\\
 &= \frac{4\lambda}{q b_0}e^{-\alpha\underline f} V(\rho_0)(1-e^{-q t}) \le \frac{4\lambda}{q b_0}e^{-\alpha\underline f} V(\rho_0),
 \end{align*}
 where we used the fact that $\|\om\|_{L^1(\rho_t)}^2\ge b_0/2$ and the concentration estimate $V(\rho_t)\le V(\rho_0)e^{-qt}$. The previous estimate tells us that $d E(\rho_t)/dt\in L^1(0,\infty)$ and thus, provides the existence of a point $\tilde x\in\R^d$, possibly depending on $\rho_0$, such that
 \[
 \hat  x = E(\rho_0) + \int_0^\infty \frac{d}{dt} E(\rho_t)\,dt = \lim\nolimits_{t\to\infty} E(\rho_t).
 \]
Moreover, since $|E(\rho_t)-\m_f[\rho_t]|\to 0$ for $t\to\infty$, we obtain $\lim_{t\to\infty} \m_f[\rho_t]=\tilde x$.
\end{proof}

\begin{remark}
 A very important takeaway from the conditions provided in Theorem~\ref{thm:concentration} is the fact that $\lambda$ and $\sigma$ may be chosen independently of $\alpha$. Indeed, if 
 \begin{align}\label{eq:condition_strong}
 	2\lambda b_0^2 - K - 2d\sigma^2 b_0 \ge 0,
 \end{align}
 then the second condition of Theorem~\ref{thm:concentration} is trivially satisfied since $f\ge0$ by assumption. Therefore, when \eqref{eq:condition_strong} is satisfied, consensus is achieved for arbitrarily large $\alpha$ satisfying $b_1\le 3/4$.
\end{remark}

\subsection{Approximate global minimizer}

While the previous results provided a sufficient condition for uniform consensus to occur, we will argue further that the point of consensus $\tilde x\in\R^d$ may be made arbitrarily close to the global minimizer $x_*$ of $f$, for $f\in \mathcal{C}^2(\R^d)$ satisfying Assumption~\ref{ass:3}. 

In the following, we assume further that $f$ attains a unique global minimum at $x_*\in \text{supp}(\rho_0)$.

\begin{theorem}\label{thm:approximate}
 Let $f$ satisfy Assumption~\ref{ass:3}. For any given $0<\e_0\ll1$ arbitrarily small, there exist some $\alpha_0\gg 1$ and appropriate parameters $(\lambda,\sigma)$ such that uniform consensus is obtained at a point $\tilde x\in B_{\e_0}(x_*)$. More precisely, we have that $\rho_t \to \delta_{\tilde x}$ for $t\to\infty$, with $\tilde x\in B_{\e_0}(x_*)$.
\end{theorem}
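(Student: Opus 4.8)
The plan is to combine the concentration result of Theorem~\ref{thm:concentration} with the Laplace principle, tracking how the point of consensus $\tilde x$ relates to the global minimizer $x_*$. First I would fix a target accuracy $\e_0>0$ and choose the parameters in the right order. Recall that Theorem~\ref{thm:concentration} guarantees, for suitable $(\alpha,\lambda,\sigma)$ with $b_1<3/4$ and $2\lambda b_0^2 - K - 2d\sigma^2 b_0 e^{-\alpha\underline f}\ge 0$ (which by the remark after the theorem can be arranged for \emph{every} large $\alpha$ once $2\lambda b_0^2 - K - 2d\sigma^2 b_0 \ge 0$ holds, since $f\ge 0$), the exponential decay $V(\rho_t)\le V(\rho_0)e^{-qt}$ together with $E(\rho_t)\to\tilde x$ and $\m_f[\rho_t]\to\tilde x$. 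So the existence of uniform consensus is already in hand; the content of this theorem is the \emph{location} of $\tilde x$.

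The key quantitative step is to estimate $|\m_f[\rho_t] - x_*|$ uniformly in $t$ in terms of the variance $V(\rho_t)$ and a Laplace-type bound. The idea: split $\m_f[\rho_t] - x_* = \int (x-x_*)\,d\eta_t^\alpha$ into the contribution of a small ball $B_r(x_*)$ and its complement. On $B_r(x_*)$ the integrand is bounded by $r$. On the complement, I would use that $f-f_* \ge $ some $\delta_r>0$ there (by uniqueness and continuity of the minimum on $\mathrm{supp}(\rho_0)$, modulo the usual issue of controlling $f$ at infinity, which the quadratic-growth-type modification in Assumption~\ref{ass:3} handles), so that $\eta_t^\alpha(B_r(x_*)^c)$ is exponentially small in $\alpha$ \emph{provided} $\|\om\|_{L^1(\rho_t)}$ does not itself degenerate — and here is exactly where the lower bound $\|\om\|_{L^1(\rho_t)}\ge b_0/2$ from the proof of Theorem~\ref{thm:concentration} is essential. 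Concretely, $\eta_t^\alpha(B_r(x_*)^c)\le \frac{e^{-\alpha(f_*+\delta_r)}}{\|\om\|_{L^1(\rho_t)}}\le \frac{2}{b_0}e^{-\alpha\delta_r}$, and the tail of $|x-x_*|$ against $\eta_t^\alpha$ is controlled by the uniform second-moment bound (Lemma~\ref{lem:moment} / the moment estimates already established). This yields $|\m_f[\rho_t]-x_*|\le r + C(\text{moments})\,\frac{1}{b_0}e^{-\alpha\delta_r/2}$, say; choosing $r=\e_0/2$ fixes $\delta_r=\delta_{\e_0/2}$, and then choosing $\alpha=\alpha_0$ large makes the second term $<\e_0/2$, giving $|\m_f[\rho_t]-x_*|<\e_0$ for all $t$. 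Passing to the limit $t\to\infty$ gives $|\tilde x - x_*|\le \e_0$, i.e.\ $\tilde x\in \overline{B_{\e_0}(x_*)}$ (shrink $\e_0$ slightly to land in the open ball).

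One subtlety to handle carefully in the ordering: the decay rate $q$ and the threshold conditions in Theorem~\ref{thm:concentration} depend on $b_0=\|\om\|_{L^1(\rho_0)}$, which itself depends on $\alpha$ and shrinks as $\alpha\to\infty$ (like $e^{-\alpha f_*}$ up to the Laplace correction). So I would first pick $\e_0$, then $r=\e_0/2$ and the associated $\delta_r$; then I choose $\alpha_0$ large enough that simultaneously (i) $b_1(\alpha_0,\lambda,\sigma)<3/4$ — automatic since $b_1\to0$ — and (ii) the Laplace tail term is $<\e_0/2$; and finally I choose $(\lambda,\sigma)$, with $\lambda$ large relative to $d\sigma^2$, so that $2\lambda b_0(\alpha_0)^2 - V(\rho_0) - 2d\sigma^2 b_0(\alpha_0)\ge 0$ holds with the now-fixed $b_0(\alpha_0)$; this is possible because the left side is increasing and unbounded in $\lambda$. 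The main obstacle is precisely this interdependence — making sure that the Laplace/tail estimate and the parameter inequalities of Theorem~\ref{thm:concentration} can be met \emph{by the same} choice of $(\alpha_0,\lambda,\sigma)$ — but since the tail estimate only requires $\alpha$ large (not a constraint on $\lambda,\sigma$), and the concentration conditions can then be satisfied by enlarging $\lambda$ for the fixed $\alpha_0$, the two requirements decouple and the argument closes.
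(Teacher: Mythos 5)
Your route is genuinely different from the paper's. The paper proves Theorem~\ref{thm:approximate} as a one-line reduction to Lemma~\ref{lem:approximate}, which works at the level of \emph{function values}: it first shows $\|\om\|_{L^1(\rho_t)}\to e^{-\alpha f(\tilde x)}$, then combines the Laplace principle for $\|\om\|_{L^1(\rho_0)}$ with the lower bound $\|\om\|_{L^1(\rho_0)}^2\le\|\om\|_{L^1(\rho_t)}^2+b(\alpha)$ coming from \eqref{eq:estimate_om} to conclude $0\le f(\tilde x)-f_*<\e$, and only at the very end converts value-closeness into spatial closeness via continuity and uniqueness of the minimizer. You instead estimate the \emph{spatial} quantity $|\m_f[\rho_t]-x_*|$ directly and uniformly in $t$, by splitting $\eta_t^\alpha$ over $B_r(x_*)$ and its complement and using the lower bound $\|\om\|_{L^1(\rho_t)}\ge b_0/2$ from the proof of Theorem~\ref{thm:concentration}. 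Your ordering of the parameter choices ($\e_0$, then $r$ and $\delta_r$, then $\alpha_0$, then $\lambda$ via \eqref{eq:condition_strong}) is correct and consistent with the remark following Theorem~\ref{thm:concentration}. Both arguments share the same unstated hypothesis that smallness of $f-f_*$ forces spatial closeness to $x_*$ (equivalently, that $\delta_r:=\inf_{B_r(x_*)^c}(f-f_*)>0$), which Assumption~\ref{ass:3} alone does not supply; you flag this, and the paper's final step has the identical issue.

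There is, however, a concrete quantitative gap in your tail estimate. You bound $\eta_t^\alpha(B_r(x_*)^c)\le 2e^{-\alpha(f_*+\delta_r)}/b_0\le (2/b_0)e^{-\alpha\delta_r}$ by discarding the factor $e^{-\alpha f_*}$, and then claim this can be made small by taking $\alpha$ large. It cannot: since $b_0=\int e^{-\alpha f}d\rho_0\le e^{-\alpha f_*}$ and $f_*=\inf f>0$ under Assumption~\ref{ass:3}, one has $(2/b_0)e^{-\alpha\delta_r}\ge 2e^{\alpha(f_*-\delta_r)}$, which \emph{diverges} whenever $\delta_r<f_*$ --- precisely the regime of small $\e_0$. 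The repair is to keep the ratio $e^{-\alpha f_*}/b_0$ intact and invoke the Laplace principle, which gives $-\frac{1}{\alpha}\log b_0=f_*+\e_\alpha$ with $\e_\alpha\to0$, hence $e^{-\alpha f_*}/b_0=e^{\alpha\e_\alpha}$ and $e^{\alpha(\e_\alpha-\delta_r)}\to0$ for each fixed $\delta_r>0$; the same care is needed in the Cauchy--Schwarz step against the second moment of $\eta_t^\alpha$. With that repair your argument closes and yields an alternative, more directly spatial proof, but as written the key smallness claim fails.
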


For the proof of Theorem~\ref{thm:approximate}, we will make use of the following auxiliary result.

\begin{lemma}\label{lem:approximate}
	Let $f$ satisfy Assumption~\ref{ass:3} and $\rho_t$ satisfy
	\[
	(1 - b(\alpha))\|\omega^\alpha_f\|_{L^1(\rho_0)}^2 \le \|\omega^\alpha_f\|_{L^1(\rho_t)}^2 \qquad\text{for all\; $t\ge 0$},
	\]
	with $0 \leq b(\alpha) \leq 1$ and $b(\alpha) \to 0$ as $\alpha \to \infty$.  Furthermore, assume that $V(\rho_t)\to0$ and $E(\rho_t)\to \tilde x$ as $t\to\infty$. Then for any given $0<\e_0\ll1$ arbitrarily small, there exist some $\alpha_0\gg 1$ such that $\tilde x\in B_{\e_0}(x_*)$.
\end{lemma}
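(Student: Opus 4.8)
The plan is to turn the hypothesis into a comparison between the value $f(\tilde x)$ and the minimum $\underline f = f(x_*)$ by a Laplace-principle argument, and then to upgrade this statement about function values into the desired statement about positions by using that $x_*$ is the \emph{unique} global minimizer. First I would note that $V(\rho_t)\to 0$ and $E(\rho_t)\to\tilde x$ combine to give $\int |x-\tilde x|^2\,d\rho_t = 2V(\rho_t) + |E(\rho_t)-\tilde x|^2 \to 0$, so that $\rho_t\to\delta_{\tilde x}$ in $W_2$, and in particular weakly. Since $\om = e^{-\alpha f}$ is continuous and bounded (by $e^{-\alpha\underline f}$), this forces $\|\om\|_{L^1(\rho_t)}\to e^{-\alpha f(\tilde x)}$ as $t\to\infty$. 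Letting $t\to\infty$ in the standing hypothesis then yields $\|\om\|_{L^1(\rho_0)}^2 \le e^{-2\alpha f(\tilde x)} + b(\alpha)$.

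Next I would bound the left-hand side from below. Because $x_*\in\supp(\rho_0)$ is the global minimizer, $\nabla f(x_*)=0$, and $\|\nabla^2 f\|_\infty\le c_f$, a second-order Taylor expansion gives $f\le\underline f+\tfrac{c_f}{2}r^2$ on $B_r(x_*)$, hence $\|\om\|_{L^1(\rho_0)}\ge m_r\,e^{-\alpha(\underline f + c_f r^2/2)}$ with $m_r:=\rho_0(B_r(x_*))>0$. Inserting this, absorbing $b(\alpha)$ — which in every application of the lemma is negligible compared with $\|\om\|_{L^1(\rho_0)}^2$ for large $\alpha$ (there $b(\alpha)\le b_1\|\om\|_{L^1(\rho_0)}^2$ with $b_1\to0$) — and taking $-\tfrac{1}{2\alpha}\log(\cdot)$, I would obtain $f(\tilde x)\le\underline f+\tfrac{c_f}{2}r^2+\tfrac{1}{2\alpha}\log(2/m_r^2)$. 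This is just the Laplace principle recalled in the introduction ($-\tfrac1\alpha\log\|\om\|_{L^1(\rho_0)}\to\inf_{\supp\rho_0}f=\underline f$) made quantitative; letting first $\alpha\to\infty$ and then $r\to0$, it shows that for every $\delta>0$ there is $\alpha_0$ with $\underline f\le f(\tilde x)<\underline f+\delta$ for all $\alpha\ge\alpha_0$, the lower bound being trivial.

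It remains to pass from ``$f(\tilde x)$ close to $\underline f$'' to ``$\tilde x$ close to $x_*$'', and this is the hard part. Using that $x_*$ is the \emph{unique} global minimizer of the continuous function $f$, one has $\delta_0:=\inf\{f(x)-\underline f : |x-x_*|\ge\e_0\}>0$, and then taking $\delta=\delta_0$ in the previous step produces an $\alpha_0$ with $f(\tilde x)<\underline f+\delta_0$ for $\alpha\ge\alpha_0$, which forces $\tilde x\in B_{\e_0}(x_*)$. The obstacle is precisely the positivity of $\delta_0$: it requires that the sublevel sets $\{f\le\underline f+\delta\}$ be bounded for $\delta$ near $0$, i.e., that the consensus point $\tilde x$ (equivalently $\lim_{t\to\infty}\m_f[\rho_t]$) lie a priori in a fixed compact set independent of $\alpha$. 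Assumption~\ref{ass:3} by itself does not guarantee this, but it holds in the relevant setting — e.g.\ when $f$ is modified outside a large ball as discussed after Assumption~\ref{ass:3}, or when $f$ has quadratic growth at infinity — and the remaining bookkeeping (the quantitative absorption of $b(\alpha)$ and the order of the limits in $\alpha$ and $r$) is routine.
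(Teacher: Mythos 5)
Your argument follows essentially the same route as the paper's proof: use $V(\rho_t)\to0$ and $E(\rho_t)\to\tilde x$ to get $\|\om\|_{L^1(\rho_t)}\to e^{-\alpha f(\tilde x)}$, pass to the limit in the standing hypothesis, compare with a Laplace-principle lower bound on $\|\om\|_{L^1(\rho_0)}$ to conclude $0\le f(\tilde x)-f_*<\e$ for $\alpha$ large, and finally invoke uniqueness of the minimizer. The only presentational difference is that you make the Laplace bound quantitative via the Taylor/local-mass estimate $\|\om\|_{L^1(\rho_0)}\ge m_r e^{-\alpha(\underline f+c_f r^2/2)}$, whereas the paper simply cites the Laplace principle; the substance is identical. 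The two caveats you flag are genuine, and both are present (unaddressed) in the paper's own proof. (i) On the $b(\alpha)$ term: the paper's chain ends with $f(\tilde x)-\frac{1}{2\alpha}\log\bigl(1+b(\alpha)e^{2\alpha f(\tilde x)}\bigr)\ge f(\tilde x)$, which for $b(\alpha)>0$ has the inequality reversed; as you note, one really needs $b(\alpha)$ small relative to $\|\om\|_{L^1(\rho_0)}^2$, which is available in the only application (there $b(\alpha)\le b_1 b_0^2$ with $b_1\le 3/4$ and $b_1\to0$), so your "absorption" step is the correct repair. (ii) On the last step: passing from $f(\tilde x)-f_*<\e$ to $\tilde x\in B_{\e_0}(x_*)$ does require $\inf\{f(x)-f_*:|x-x_*|\ge\e_0\}>0$, i.e.\ some coercivity or an a priori confinement of $\tilde x$ to a compact set; continuity plus uniqueness of the global minimizer do not suffice (consider $f$ with a unique minimizer but $f\to f_*$ along a sequence going to infinity, which is compatible with Assumption~\ref{ass:3}). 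The paper asserts this step in one sentence without justification, so your proposal is no weaker than the published argument and is more honest about where it is thin.
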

\begin{proof}
	From the assumptions $V(\rho_t)\to 0$ and $E(\rho_t)\to \tilde x$ as $t \to \infty$, we deduce
	\begin{align*}
	\biggl|\int e^{-\alpha f} d\rho_t - \int_{B_\delta(\tilde x)} e^{-\alpha f} d\rho_t\biggr| &= \int_{\{|x-\tilde x|\ge k\}} e^{-\alpha f} d\rho_t  \le \rho_t(\{|x-\tilde x|\ge k\}) \\
	&\le \frac{1}{k^2}\int_{\{|x-\tilde x|\ge k\}} |x-\tilde x|^2\,d\rho_t \\
	&\le \frac{2}{k^2}\Bigl(V(\rho_t) + |E(\rho_t)-\tilde x|^2\Bigr),
	\end{align*}
	for any $k>0$. Furthermore, from Chebyshev's inequality, we know that $\rho_t \rightharpoonup \delta_{\tilde x}$ narrowly as $t\to\infty$. Thus, since $f$ is continuous and bounded on $B_{\delta}(\tilde x)$, we may pass to the limit in $t$ to obtain
	\[
	\lim_{t\to\infty} \int_{B_{\delta}(\tilde x)} e^{-\alpha f} d\rho_t = e^{-\alpha f(\tilde x)}.
	\]
	Altogether, we obtain the convergence
	\begin{equation}
	\|\omega^\alpha_f\|_{L^1(\rho_t)} \to e^{-\alpha f(\tilde x)} \quad \mbox{as} \quad t \to \infty.
	\end{equation}
	On the other hand, the Laplace principle (see e.g.~\cite{doi:10.1142/S0218202517400061}) gives us the possibility to choose $\alpha_1 \gg1$ large enough for any given $\e > 0$, such that
	\[
	-\frac1\alpha \log \|\omega^\alpha_f\|_{L^1(\rho_0)} - f_* < \frac\e2
	\]
	for any $\alpha\ge \alpha_1$. By assumption, we have that
	\[
	-\frac1\alpha \log \|\omega^\alpha_f\|_{L^1(\rho_t)} \leq -\frac1\alpha \log \|\omega^\alpha_f\|_{L^1(\rho_0)} - \frac{1}{2\alpha}\log(1 - b(\alpha)).
	\]
	From the continuity of the logarithm, we can pass to the limit $t\to\infty$ to obtain
	\[
	f(\tilde x) \leq -\frac1\alpha \log \|\omega^\alpha_f\|_{L^1(\rho_0)} - \frac{1}{2\alpha}\log(1 - b(\alpha)).
	\]
	This, together with using the assumption that $b(\alpha) \to 0$ as $\alpha \to \infty$, yields 
	\[
	0\le f(\tilde x) - f_*  < \frac\e2 - \frac{1}{2\alpha}\log(1 - b(\alpha)) < \e.
	\]
	for $\alpha \ge \alpha_2$ with $\alpha_2\gg 1$ sufficiently large.
	Finally, using the continuity of $f$ and the uniqueness of the global minimum, we can find a suitable small $\e_0>0$ (correspondingly $\alpha_0\ge \max\{\alpha_1,\alpha_2\}$) such that $\tilde x\in B_{\e_0}(x_*)$ for all $\alpha\ge \alpha_0$.
\end{proof}

\begin{proof}[Proof of Theorem~\ref{thm:approximate}]
	It follows from Lemma 4.1 that 
	\[
	\|\omega^\alpha_f\|_{L^1(\rho_t)}^2 \geq \|\omega^\alpha_f\|_{L^1(\rho_0)}^2 - b_1(\alpha)\int_0^t V(\rho_s)\,ds,
	\]
	where $b_1(\alpha) \to 0$ as $\alpha \to \infty$. Furthermore, by choosing $\lambda$ and $\sigma$ according to \eqref{eq:condition_strong}, we obtain from Theorem \ref{thm:concentration} that
	\[
	 \int_0^t V(\rho_s)\,ds \le \int_0^t V(\rho_0)e^{-qs}\,ds = \frac{V(\rho_0)}{q}(1-e^{-qt}) \le (1-e^{-qt})\|\omega^\alpha_f\|_{L^1(\rho_0)}^2,
	\]
	and consequently,
	\[
	\|\omega^\alpha_f\|_{L^1(\rho_t)}^2 \geq \|\omega^\alpha_f\|_{L^1(\rho_0)}^2(1 - b_1(\alpha)). 
	\]
	We conclude the proof by choosing $\alpha\ge \alpha_0$ with $\alpha_0$ obtained from Lemma~\ref{lem:approximate}.
\end{proof}


\section{Pseudo-inverse Distribution, Extended Models and Numerical Results}

In this section, we consider the Fokker--Planck equation \eqref{eq:fokker-planck} in one dimension and derive an equivalent formulation of the equation in terms of the pseudo-inverse distribution function. Then, we introduce an extension of the current model to replace the diffusion term with nonlinear diffusions of porous media type, which would guarantee compact support of the probability measure $\rho_t$. In this case the pseudo-inverse distribution function allows us to investigate concentration by only considering the evolution of the boundary points. To investigate this numerically, we introduce schemes for the porous media type equation and the evolution equation for the pseudo-inverse distribution function. We conclude with numerical results that illustrate the convergence results shown above.

\subsection{Evolution of the inverse distribution function}
We first define the well-known cumulative distribution $F_t$ of a probability measure $\rho_t$ and its pseudo-inverse $\chi_t$ (see e.g.~\cite{villani2003topics}) by
\begin{align*}
  F_t(x) = \int_{-\infty}^x d\rho_t = \rho_t((-\infty,x]).
 \end{align*}
Then, the pseudo-inverse of $F_t$ on the interval $[0,1]$ is defined by
 \begin{align*}
  \chi_t(\eta) := F_t^{-1}(\eta) := \inf\{ x \in \R\;|\; F_t(x) > \eta \}.
 \end{align*}
Both, $F_t$ and $\chi_t$ are by definition right-continuous. To derive the evolution equation for $\chi_t$ we use the properties of $\rho_t$, $F_t$ and $\chi_t$ collected in the following corollary. These properties can be obtained by basic calculus using the above definitions.

\begin{corollary}\label{cor:propChi}
 Let $\rho_t$ be a probability measure, $F_t$ the corresponding cumulative distribution and $\chi_t$ the pseudo-inverse distribution of $F_t$ as defined above. Then, the following equations hold
 \begin{gather*}
  \chi(t, F(t,x)) = x, \qquad F(t,\chi(t,\eta)) = \eta, \qquad \partial_t F = -\rho \partial_t \chi, \qquad\partial_\eta \chi = \frac{1}{\rho},\\
  \partial_\eta \chi\, \partial_x F = 1, \qquad \frac{\partial_x \rho}{\rho} = \frac{\partial_{xx}F}{\partial_x F} = -\frac{\partial_{\eta \eta} \chi}{(\partial_\eta \chi)^2}
 \end{gather*}
restricted to $x = \chi(t,\eta)$, $\eta = F(t,x)$ and $x \in supp(\rho)$, respectively.
\end{corollary}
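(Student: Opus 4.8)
The plan is to treat every identity in Corollary~\ref{cor:propChi} as a consequence of the single defining relation between $F_t$ and its pseudo-inverse $\chi_t$, under the (implicit) regularity that on $\supp(\rho_t)$ the measure $\rho_t$ admits a density---also denoted $\rho_t$---that is continuous and bounded away from zero there, and continuously differentiable where the last identity is invoked. With these hypotheses, $F_t$ restricted to $\supp(\rho_t)$ is a strictly increasing $\mathcal C^1$ (resp.\ $\mathcal C^2$) bijection onto its range in $[0,1]$, whose inverse is precisely $\chi_t$. All the differentiations below are then just the elementary chain rule, and the real content is keeping track of the domains on which each assertion is claimed, which is exactly what the qualifier ``restricted to $x=\chi(t,\eta)$, $\eta=F(t,x)$ and $x\in\supp(\rho)$'' encodes.

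First I would establish the two mutual-inverse relations $\chi(t,F(t,x))=x$ and $F(t,\chi(t,\eta))=\eta$. Right-continuity of $F_t$ together with $\chi_t(\eta)=\inf\{x: F_t(x)>\eta\}$ always gives $F_t(\chi_t(\eta))\ge\eta$; strict monotonicity and continuity of $F_t$ on $\supp(\rho_t)$ (no plateaus, no jumps) upgrade this to equality for $\eta$ in the range, and symmetrically give $\chi_t(F_t(x))=x$ for $x\in\supp(\rho_t)$. This is the only step where the subtleties of the pseudo-inverse genuinely enter; it is also the step that forces the restriction clauses in the statement. For this I would simply cite \cite{villani2003topics} and state the standing regularity assumptions explicitly.

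The remaining identities follow by differentiation. Differentiating $F(t,\chi(t,\eta))=\eta$ in $\eta$ and using $\partial_x F=\rho$ gives $\rho(t,\chi(t,\eta))\,\partial_\eta\chi(t,\eta)=1$, which is at once the identity $\partial_\eta\chi\,\partial_x F=1$ and, upon solving, $\partial_\eta\chi=1/\rho$. Differentiating the same relation in $t$ yields $\partial_t F(t,\chi(t,\eta))+\rho(t,\chi(t,\eta))\,\partial_t\chi(t,\eta)=0$, i.e.\ $\partial_t F=-\rho\,\partial_t\chi$. For the final chain of equalities, $\partial_x\rho/\rho=\partial_{xx}F/\partial_x F$ is immediate from $\rho=\partial_x F$; and differentiating $\partial_\eta\chi=1/\rho(t,\chi(t,\eta))$ once more in $\eta$ produces $\partial_{\eta\eta}\chi=-\rho^{-2}\,\partial_x\rho\,\partial_\eta\chi=-\partial_x\rho/\rho^3$, so that dividing by $(\partial_\eta\chi)^2=1/\rho^2$ gives $-\partial_{\eta\eta}\chi/(\partial_\eta\chi)^2=\partial_x\rho/\rho$, as claimed.

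I expect the main (indeed only) obstacle to be the first step: legitimately passing from ``$\chi_t$ is the pseudo-inverse of $F_t$'' to ``$\chi_t$ is the genuine inverse of $F_t|_{\supp(\rho_t)}$ and inherits its differentiability''. Everything after that is bookkeeping with the chain rule. Accordingly I would make the needed regularity of $\rho_t$ (continuity, positivity, and $\mathcal C^1$-smoothness on $\supp(\rho_t)$) part of the hypotheses, and restrict all pointwise identities to $x\in\supp(\rho_t)$ with $\eta=F_t(x)$, exactly as in the statement.
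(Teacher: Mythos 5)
Your proposal is correct and is precisely the ``basic calculus using the above definitions'' that the paper invokes without writing out: establish that $\chi_t$ is the genuine inverse of $F_t$ on $\supp(\rho_t)$, then obtain every identity by the chain rule applied to $F(t,\chi(t,\eta))=\eta$ and to $\partial_\eta\chi=1/\rho$. Your explicit attention to the regularity of $\rho_t$ on its support and to where the pseudo-inverse coincides with the true inverse is a welcome sharpening of what the paper leaves implicit, but it is not a different argument.
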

From these properties we may now derive an integro-differential equation for the pseudo-inverse $\chi_t$, $t\ge 0$. Indeed, let us consider the solution $\rho_t$ to \eqref{eq:fokker-planck} satisfying $\rho_t \in \C([0,\infty), \P_2(\R))$. 
From the definition of $F_t$, we deduce that
\[
 \partial_t F_t(x) - \mu_t(x) \rho_t(x) = \rho_t(x) \partial_x \kappa_t(x) + \kappa_t(x) \partial_x \rho_t(x).
\]
We then use the relations between $\chi_t$ and $\rho_t$ provided in Corollary~\ref{cor:propChi} to obtain
\[
 -\rho_t(x) \partial_t \chi_t(\eta) - \mu_t(x) \rho_t(x) = \rho_t(x) \partial_x \kappa_t(x) + \kappa_t(x) \partial_x \rho_t(x),
\]
which consequently yields
\[
 \partial_t \chi_t(\eta) + \mu_t(x) = - \partial_x \kappa_t(x) - \kappa_t(x) \frac{\pa_x \rho_t(x)}{\rho_t(x)} = -\frac{\pa_\eta \kappa_t(\eta)}{\pa_\eta \chi_t(\eta)} + \kappa_t(\eta)\frac{\pa_{\eta\eta}\chi_t(\eta)}{(\pa_\eta \chi_t(\eta))^2}.
\]
On the other hand, Corollary~\ref{cor:propChi} implies 
\[
\mu_t = \lambda(\chi_t - \m_f[\chi_t]) \quad \mbox{and} \quad \kappa_t = (\sigma^2/2)|\chi_t - \m_f[\chi_t]|^2,
\]
respectively, where $\m_f[\chi_t]$ is given by
\[
\m_f[\chi_t] =  \frac{\int_{0}^1 \chi_t \exp(-\alpha f(\chi_t(\eta))) \,d \eta}{\int_{0}^1 \exp(-\alpha f(\chi_t(\eta))) \,d \eta}.
\]
Hence, the pseudo-inverse distribution $\chi_t$ satisfies the following integro-differential equation:
\begin{equation*}
 \partial_t \chi_t + \mu_t = - \partial_\eta\lt(\kappa_t (\partial_\eta \chi_t)^{-1}\rt).
\end{equation*}

Clearly, the results of Section~\ref{sec_lt} concerning the concentration and approximation of the global minimizer hold in the equivalent formulation in terms of $\chi_t$ as well. 


\subsection{Porous media version of the evolution equation}
One very common application of the pseudo-inverse distribution $\chi_t$ is to study the behavior of the support $\supp(\rho_t)$ of the corresponding probability measure $\rho_t$. This is especially interesting when $\rho_t$ has compact support. Unfortunately, we do not have that in the present case due to the diffusion, which causes $\rho_t$ to have full support in $\R$. This naturally leads to the idea of increasing the power of $\rho_t$ in the diffusion term, inspired by the porous media equation.\cite{carrillo2004finite} The evolution equation for $\rho_t$ then becomes
\begin{align}\label{eq:porousRho}
 \partial_t \rho_t + \partial_x(\mu_t \rho_t) = \partial_{xx} (\kappa_t \rho_t^p).
\end{align}
with porous media coefficient $p \ge 1$. Notice that the previous model is included here for $p=1$. The derivation of the evolution equation for $\chi_t$ corresponding to this equation may be analogously done, which leads to 
\begin{align}\label{eq:porousChi}
\partial_t \chi_t + \mu_t = -\partial_{\eta}(\kappa_t (\partial_\eta \chi_t)^{-p}). 
\end{align}
Further investigation of the diffusion term results in
\[
 \partial_\eta(\kappa_t (\partial_\eta \chi_t)^{-p} ) = \partial_\eta (\kappa_t\, \rho_t^p) = \rho_t^p \partial_\eta \kappa_t + \kappa_t \partial_\eta (\rho_t^p) = \rho_t^p \partial_\eta \kappa_t + p\kappa_t \rho_t^{p-1} \partial_\eta \rho_t
\]
in $(\eta,t)$ variables. For $p > 1$ we can do the following formal computations. Due to mass conservation of $\rho_t$ we assume a no flux condition for \eqref{eq:porousRho} which in $(x,t)$ variables reads
\[
 \mu_t\rho_t = \partial_x (\kappa_t \rho_t^p) = \rho_t^p \partial_x \kappa_t + p\kappa_t \rho_t^{p-1} \partial_x \rho_t,
\]
on the boundary points of $\supp(\rho_t)$. Consequently, we obtain
\begin{align*}
 \partial_\eta(\kappa_t (\partial_\eta \chi_t)^{-p})(F_t(x)) &= [\rho_t^{p}  \partial_x \kappa_t + p \kappa_t \rho_t^{p-1} \partial_x \rho_t]\partial_\eta \chi_t (F_t(x))\\
 &= \mu_t \rho_t \partial_\eta \chi_t (F_t(x)) = \mu_t,
\end{align*}
on the boundary points of $\supp(\rho_t)$. Therefore, restricting \eqref{eq:porousChi} onto the boundary points yields
\begin{equation}\label{eq:evolutionBoundary}
 \partial_t \chi_t (\eta) = - 2 \mu_t(\eta) = -2 \lambda (\chi_t(\eta) - \m_f[\chi_t]) \qquad \text{ for } \eta \in \{0,1\}.
\end{equation}
As $\m_f[\chi_t]$ is contained in the interior of $\supp(\rho_t)$ by definition, $\mu_t$ is negative at the left boundary point $\eta=0$ and positive at the right boundary point $\eta=1$. Hence, \eqref{eq:evolutionBoundary} implies the shrinking of $\supp(\rho_t)$. In particular, one expects the concentration of $\chi_t$ at $\m_f$ as $t\to\infty$. We will numerical check this behavior in the next subsection.

\subsection{Discretization of the evolution equation for $\chi_t$}
To investigate the behavior of the pseudo-inverse $\chi_t$ numerically, we use an implicit finite difference scheme. Following the ideas in \cite{blanchet2008convergence} we denote the discretized version of $\chi_t$ by $\chi_k^i$, where the spatial discretization is indexed by $k$ and the temporal discretization by $i$. The spatial and temporal step sizes are denoted by $h$ and $\tau$, respectively. A straight forward discretization of the general equation \eqref{eq:porousChi} yields
\begin{equation}\label{disretization_porous}
 \frac{\chi_{k}^{i+1} - \chi_{k}^i}{\tau} = -  \left( \frac{\kappa (\chi_{k+1}^{i+1},\m_f^{i})}{(\chi_{k+1}^{i+1}-\chi_{k}^{i+1})^p}-\frac{\kappa (\chi_{k}^{i+1},\m_f^{i})}{(\chi_{k}^{i+1}-\chi_{k-1}^{i+1})^p} \right) h^{p-1} + \lambda (\m_f^{i} - \chi_{k}^{i+1}),
\end{equation}
for $\eta \in (0,1)$, where $\m_f^i = \m_f[\chi^i]$. At the boundary points $\eta=0,1$ the expressions
\[
 \big(\chi_{k}^{i}-\chi_{k-1}^{i}\big)^{-p} \quad \text{and} \quad \big(\chi_{k+1}^{i}-\chi_{k}^{i}\big)^{-p}
\]
are set to zero, respectively. As stopping criterion for the iteration procedure we use 
\begin{equation*}
 \|\chi^{i+1} - \chi^i\|_{L^2(0,1)} < \tol.
\end{equation*}
Since we expect the density $\rho_t$ to concentrate close to the minimizer $x_*\in\R^d$ of the cost function $f$, the pseudo-inverse $\chi_t$ should converge towards the constant function with value $x_*\in\R^d$. This causes problems in the computation of the fractions appearing in \eqref{disretization_porous}. Our workaround is to evaluate the fractions up to a tolerance and set them artificially to zero if the denominator is too small. The scheme is tested with the well-known Ackley benchmark function for global optimization problems (see e.g.~\cite{askari2013large}) shown in Figure \ref{fig:Preprocessing}.

\subsection{Particle approximation}
To compare the results of the extension $p>1$ to the scheme in \cite{doi:10.1142/S0218202517400061}, we are interested in a particle scheme corresponding to the evolution equation for $p=2$. Note that in contrast to the pseudo-inverse distribution case, we are not restricted to one dimension here. We derive a numerical scheme by rewriting \eqref{eq:porousRho} as
\begin{equation*}
 \partial_t \rho_t = - \nabla_x (\mu_t \rho_t) + \Delta (\kappa_t \rho_t^2) = \nabla_x[ - \mu_t \rho_t +\rho_t(\nabla_x (\kappa_t \rho_t) + \kappa_t \nabla_x \rho_t) ].
\end{equation*}
The terms $\nabla_x (\kappa_t \rho_t)$ and $\nabla_x \rho_t$ are mollified in the spirit of \cite{klar2014multiscale} with the help of a mollifier $\varphi_\e$,
\begin{equation*}
 \nabla_x (\kappa_t \rho_t) \approx \nabla_x \varphi_\e \ast (\kappa_t \rho_t) \qquad \text{and} \qquad \nabla_x \rho_t \approx  \nabla_x \varphi_\e \ast \rho_t.
\end{equation*}
Altogether this yields the approximate deterministic microscopic system
\begin{equation}\label{eq:porousParticle}
 \dot X_t^i = - \lambda(X_t^i - \m_t) + \frac{\sigma}{N} \sum_{j=1}^N \nabla_x \varphi_\e(X_t^i - X_t^j) \Big[|X_t^j - \m_t|^{2p} + |X_t^i - \m_t|^{2p}\Big], 
\end{equation}
for $i=1,\ldots N$, using the notation in Introduction.
\begin{remark}
 Note that the scheme \eqref{eq:porousParticle} is deterministic in contrast to the scheme \eqref{eq:particle} for $p=1$. Unfortunately, it is not trivial to extend the particle scheme to $p > 2$.
\end{remark}

\subsection{Numerical results}
In the following, numerical results corresponding to the above discretizations are reported. We use $200$ grid points for the spatial discretization of $\chi_t$ and $500$ particles for the particle approximation schemes. Further parameters are fixed as 
\[
 \tau = 2.5\cdot10^{-3},\qquad \alpha=30,\qquad \sigma = 0.8,\qquad \tol = 10^{-6}.
\]
The mollifier is chosen to be $\varphi_\e=\e^{-d}\varphi(x/\e)$, where
\[
 \varphi (x) = \frac{1}{Z_d}\begin{cases} \exp\left(\frac{1}{|x|^2 - 1}\right), &\text{if } |x| < 1 \\  0, &\text{else}  \end{cases},
\]
with normalizing constant\, $Z_d$.

Figure~\ref{fig:Progression} shows the progression of $\chi_t$ over time. On the left side the case $p=1$ is depicted. The tails mentioned in the discussion of \eqref{eq:porousRho} can be seen near the boundary. On the right side the diffusion coefficient is $p=2$, in this case no tails occur as expected.

In \cite{doi:10.1142/S0218202517400061}, the following scheme with an approximate Heaviside function was proposed:
\begin{align*}
 dX_t^i = -\lambda (X_t^i-\m_t)\,H_\e(f(X_t^i)-f(\m_t))\,d t + \sqrt{2} \sigma |X^i_t-\m_f| d W_t^i, 
\end{align*}
where $\m_t$ is as given in \eqref{eq:particle_b}.
Initially, the Heaviside function was included to assure that the particles do not concentrate abruptly. This is essential in cases where the weight parameter $\alpha>0$ is chosen too small, thereby yielding a rough approximation of the minimizer at the start of the simulation. In fact, the presence of the Heaviside function prevents particles that attain function values smaller than the function values at the average, i.e., $f(X^i) < f(\m_t)$, from drifting to $\m_t$. In those cases, only the diffusion part is active.

\begin{figure}
	\begin{minipage}{0.49\textwidth}
		\includegraphics[width=.95\textwidth]{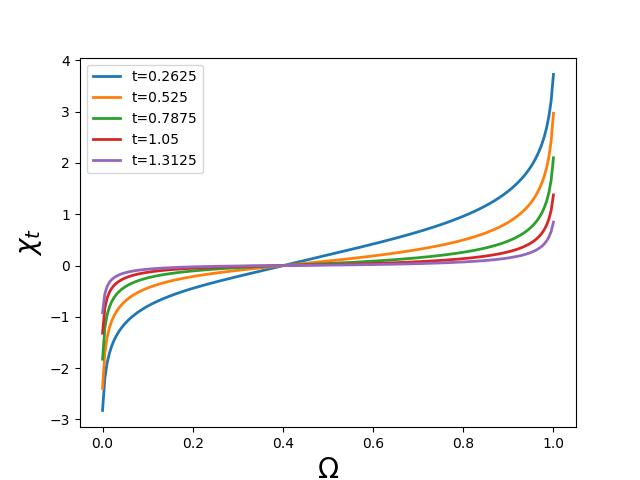}
	\end{minipage}
	\hfill
	\begin{minipage}{0.49\textwidth}
		\includegraphics[width=.95\textwidth]{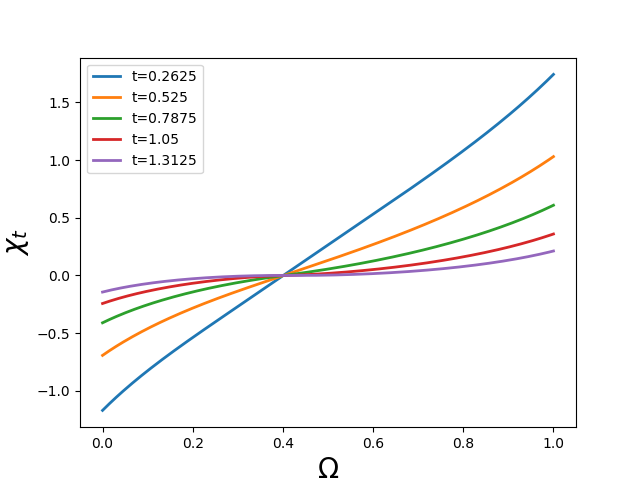}
	\end{minipage}\\[0.5em]
	\caption{Ackley benchmark in 1d. Progression of the inverse distribution function over time for the three benchmarks. Left: Diffusion with $p=1$. Right: Diffusion with $p=2$.}
	\label{fig:Progression}
\end{figure}

An analogous particle scheme for the porous media equation with $p=2$ reads
\begin{align*}
 \dot X_t^i &= - \lambda(X_t^i - \m_t)\,H_\e(f(X_t^i)-f(\m_f)) \\
 &\hspace*{4em}+ \frac{\sigma}{N} \sum_{j=1}^N \nabla_x \varphi(X_t^i - X_t^j) \Big[|X_t^j - \m_t|^{2p} + |X_t^i - \m_t|^{2p}\Big].
\end{align*}
For both schemes a smooth approximation of the Heaviside function of the form, 
\[
 H_\e=(1+\erf(x/\e))/2
\]
is used.  We compare the results with and without the Heaviside function in Figure~\ref{fig:errorPlots}. In these simulations, we see the damping effect of the Heaviside function. The simulations without Heaviside converge faster. Due to the large value of $\alpha$, the minimizer is approximated well by $\m_f$, thus the concentration happens very close to the actual minimum of the objective functions.

The graphs show the $L_2$-distance of ${\bf X}_t$ (left) and $\chi_t$ (right) to the known minimizer $x_*$ or equivalently the 2-Wasserstein distance between the solutions of the mean-field equation and the particle scheme to the global consensus at $\delta_{x_*}$. The schemes with nonlinear diffusion $p=2$ converge faster than their corresponding schemes with linear diffusion. Nevertheless, for practical applications with large number of particles, the scheme with linear diffusion is more reasonable due to shorter computation times. In fact, in each iteration of the scheme \eqref{eq:porousParticle} the convolution of all particles has to be computed. The error of the simulation for $\chi_t$ is smaller then the one for ${\bf X}_t$ at equal times. The linear graphs with respect to the logarithmic scaling of the y-axis in Figure~\ref{fig:errorPlots} indicate the exponential convergence shown in the theoretical section (cf.~Theorem \ref{thm:concentration}). The stochasticity influencing the schemes for $p=1$ can be observed in the graphs in Figure~\ref{fig:errorPlots} (left).

\begin{figure}
  \begin{minipage}{0.49\textwidth}
  \includegraphics[width=.95\textwidth]{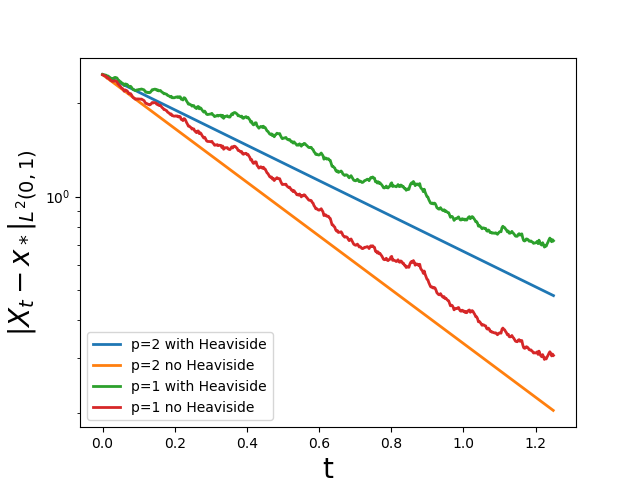}
 \end{minipage}
 \hfill
 \begin{minipage}{0.49\textwidth}
  \includegraphics[width=.95\textwidth]{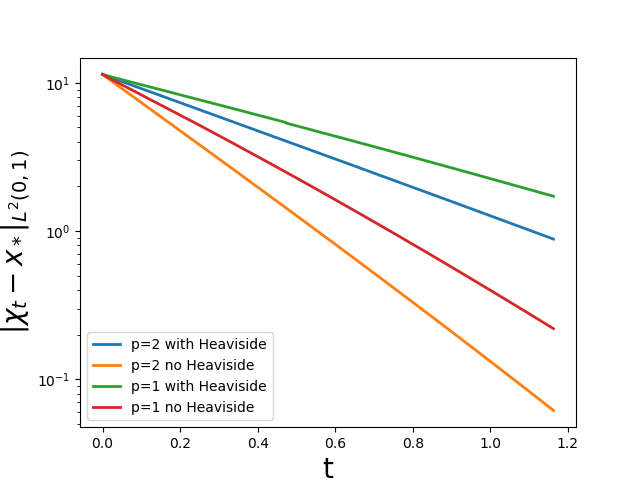}
 \end{minipage}\\[0.5em]
 \caption{Ackley benchmark in one dimension. $L_2$ error of the solution with respect to the minimizer $x_*$ or, equivalently, the 2-Wasserstein distance between the solution and $\delta_{x^*}$ for the different benchmarks. Left: Particle scheme. The stochastic influence is visible for $p=1$, as the graphs rely on data of one realization. Right: Pseudo-inverse distribution. }
 \label{fig:errorPlots}
\end{figure}

In contrast to the pseudo-inverse distribution function, which is only available in one dimension, the particle scheme can be easily generalized to higher dimensions. We conclude the manuscript with some numerical results obtained with the particle scheme in two dimensions applied to the Ackley benchmark. In Figure~\ref{fig:plots2d} (left) we see the surface and contour plot of the Ackley function in two dimensions. In Figure~\ref{fig:plots2d} (right), the convergence of the different particle schemes is illustrated. For the stochastic scheme with $p=1$ the data is averaged over 100 Monte Carlo simulations. The graphs are in good agreement with the corresponding graphs of the pseudo-inverse distribution function in one dimension. Due to the averaging the stochastic influence which can be seen in Figure~\ref{fig:errorPlots} (left) disappears.

\begin{figure}
  \begin{minipage}{0.49\textwidth}
  \includegraphics[width=\textwidth]{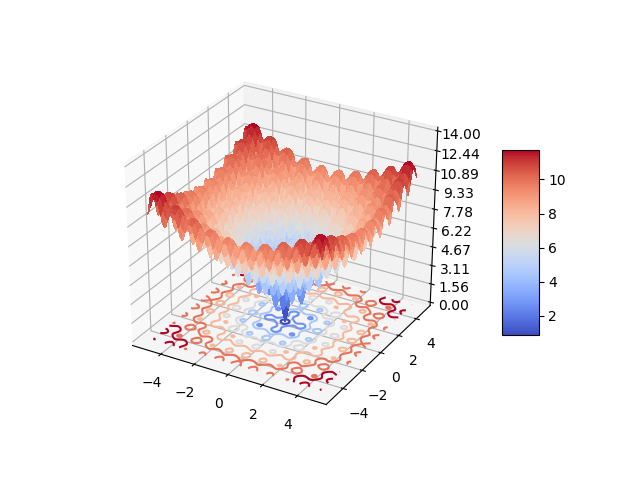}
 \end{minipage}
 \hfill
 \begin{minipage}{0.49\textwidth}
  \includegraphics[width=.95\textwidth]{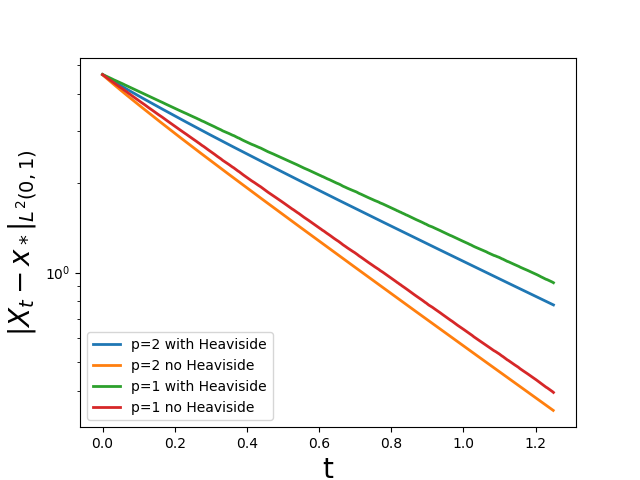}
 \end{minipage}\\[0.5em]
 \caption{Ackley benchmark function in two dimensions. Left: Surface and contour plot of the function in two dimensions. Right: Convergence of the particle schemes. For the stochastic cases with $p=1$ the average of 100 Monte Carlo simulations is shown.}
 \label{fig:plots2d}
\end{figure}

\section*{Acknowledgments}
JAC was partially supported by the Royal Society by a Wolfson Research Merit Award and by EPSRC grant number EP/P031587/1. Y-PC was supported by the Alexander Humboldt Foundation through the Humboldt Research Fellowship for Postdoctoral Researchers. Y-PC was also supported by NRF grants (NRF-2017R1C1B2012918 and 2017R1A4A1014735). CT was partially supported by a 'Kurzstipendium f\"ur Doktorandinnen und Doktoranden' by the German Academic Exchange Service. OT is thankful to Jim Portegies for stimulating discussions.

\bibliographystyle{plain}
\bibliography{consensus}

\end{document}